\documentclass[12pt]{article}

\usepackage[T1]{fontenc}
\usepackage[utf8]{inputenc}
\usepackage[margin=1in]{geometry}
\usepackage{lmodern}
\usepackage{microtype}
\usepackage{amsthm}
\usepackage{amssymb}
\usepackage{enumitem}
\usepackage{newtxtext,newtxmath} 
\usepackage{mathtools}
\usepackage{titlesec,setspace}
\usepackage[authoryear,round]{natbib}
\usepackage[bookmarksnumbered,bookmarksopen,colorlinks,linkcolor={black},urlcolor={black},citecolor={black}]{hyperref}

\expandafter
\def \expandafter \normalsize \expandafter{\normalsize \setlength \abovedisplayskip{10pt plus 2pt minus 7pt}}
\expandafter
\def \expandafter \normalsize \expandafter{\normalsize \setlength \abovedisplayshortskip{0pt plus 2pt}}
\expandafter
\def \expandafter \normalsize \expandafter{\normalsize \setlength \belowdisplayskip{10pt plus 2pt minus 7pt}}
\expandafter
\def \expandafter \normalsize \expandafter{\normalsize \setlength \belowdisplayshortskip{5pt plus 2pt minus 3pt}}

\newtheorem{thm}{Theorem}[section]
\newtheorem{cor}{Corollary}[section]
\newtheorem{lemma}{Lemma}[section]
\newtheorem{proposition}{Proposition}[section]

\theoremstyle{definition}
\newtheorem{remark}{Remark}[section]
\newtheorem{assumption}{Assumption}[section]

\newtheorem{example}{Example}[section]

\mathtoolsset{mathic=true}

\newcommand{\ii}{\mathrm{i}}
\newcommand{\RRe}{\mathrm{Re}}

\newcommand{\Erl}{\mathrm{Erl}}

\newcommand{\Exp}{\mathrm{Exp}}

\newcommand{\PP}{\mathbb{P}}

\providecommand{\mathbbm}[1]{\mathbf{1}}
\numberwithin{equation}{section}

\begin{document}
    \title{\Large Random trade timing and power-law tails in realized prices}
    \author{Won-Ki Seo \\[3pt] University of Sydney}
    \date{}

    \maketitle

\begin{abstract}
This paper studies stochastic mechanisms under which light-tailed latent price dynamics yield realized prices with power-law tails. The realized price is modeled as $P_T=e^{X_T}$, where $X$ is a Markov-modulated L\'evy
process and $T$ is the random time of the next trade. We consider two trade-timing environments. In the intertrade-incidence model, trades occur on a discrete grid with type-dependent probabilities. In the intertrade-time
model, the waiting time to the next trade is generalized Erlang, allowing for heterogeneous arrival rates and transaction-completion delays. We show that random trade timing can generate Pareto-type tails, possibly with a
logarithmic correction, in realized prices even when the latent price process is light-tailed. In both models, the tail exponent is determined by the least frequent trading type, while the proportions of faster-trading
types affect only the scale constant. We also provide sufficient conditions under which these Pareto-type tails sharpen to exact Paretian tails. These results identify random trade timing and heterogeneity in trading behavior
as a general mechanism for generating power-law tails in realized prices. 

\end{abstract}

\noindent\textbf{Keywords:} random trade timing; power-law tails; Markov-modulated L\'evy processes; generalized Erlang timing; realized prices.

\section{Introduction}
Power-law behavior in price changes has been documented for a wide range of commodities
and financial assets. Representative examples include stocks \citep{cont2001,gabaix2003,plerou2004},
foreign exchange \citep{mcfarland1982}, and spot and futures prices of several commodities
\citep{Mandelbrot1963,Matia2002}; for a broader perspective on power laws in economics and finance, see \citet{gabaix2009}. Although the empirical evidence is broad, theoretical mechanisms are often tied to particular markets or asset classes. A general question therefore remains open: how can power-law tails arise in realized prices when the underlying continuous-time price dynamics are light-tailed?

This paper studies one such mechanism based on random trade timing. Observed prices are realized only when a trade occurs, so the distribution of the realized price depends on both the latent price process and the random time at which the market is sampled. We model the realized price as
\begin{align}
P_T = e^{X_T},
\end{align}
where $X=(X_t,t\ge 0)$ is a Markov-modulated L\'{e}vy process and $T$ is the random time of the next trade. This formulation builds on the classical subordination idea of \citet{clark1973}, who modeled speculative prices as a stochastic process sampled at a random time.  Markov modulation allows the latent dynamics to vary across regimes, while random trade timing captures heterogeneity in trading behavior across market participants. The matrix Laplace exponent of $X$, denoted $A(\cdot)$, plays a central role in the analysis: its dominant real eigenvalue $r_D(A(\cdot))$ governs the moment generating function (MGF) of $X_t$ and, as shown below, the tail behavior of $P_T$.

We use the following terminology throughout; for general background on heavy-tailed distributions, see \citet{embrechtsetal1997} and \cite{resnick2007heavy}.
A random variable $Y$ has a \emph{Pareto-type tail} with exponent $\alpha>0$ if there exist constants $0<C_1\le C_2<\infty$ such that
\[
C_1 \le y^\alpha \PP(Y>y) \le C_2
\]
for all sufficiently large $y$, and a \emph{Paretian tail} with exponent $\alpha>0$ if $y^\alpha \PP(Y>y)$ converges to a positive constant as $y\to\infty$. A Paretian tail is a Pareto-type tail with a sharp asymptotic constant; the distinction matters because our results deliver only the former in general, with additional sufficient conditions required for the latter. More generally, we say that $Y$ has a Pareto-type tail with a \emph{logarithmic correction} of order $k\in\mathbb{N}$ if there exist constants $0<C_1\le C_2<\infty$ such that
\[
C_1 \le (\log y)^{-k} y^\alpha \PP(Y>y) \le C_2
\]
for all sufficiently large $y$.

We consider two trade-timing environments. In the intertrade-incidence model (IIM), trades occur on a discrete grid, and each type trades at each opportunity with its own probability $p$, so that the waiting time to the next trade is geometrically distributed. A natural extension, in which the waiting time follows a negative-binomial distribution, is also examined. In the intertrade-time model (ITM), the trade time is the sum of an exponential arrival time and a finite number of additional exponentially distributed stages: the exponential arrival captures the wait for the next potential counterparty, and the additional stages capture possible multi-stage transaction-completion delays that defer the realization of the next trade. The resulting distribution of $T$ is generalized Erlang, and the benchmark exponential-timing case of \citet{bst2018} arises as the special case in which no post-arrival stages are present. In both models, heterogeneity enters through a finite mixture of trading types, where each type is indexed by its own trading-frequency parameter---a trade probability $p$ in the IIM and an arrival intensity $\lambda$ in the ITM.

Our main results characterize the upper tail of $P_T$ in terms of the slowest
component of the trade-time distribution. In the IIM, this is the trading type
with the smallest trade probability. In the ITM, it is the smallest exponential
rate appearing in the trade-time distribution---either the smallest type-specific
arrival intensity or the smallest completion-stage rate, whichever is smaller.
In each case, the tail exponent is determined by an eigenvalue equation involving
the matrix Laplace exponent $A(\cdot)$ and this slowest rate. Faster components
affect the scale of the tail but not its exponent. Importantly, the mechanism can
produce Pareto-type upper tails in $P_T$ even when the latent price process is
light-tailed; in particular, this is the case when each regime has a nondegenerate
diffusion component, though a nondegenerate diffusion component is not necessary.
We also show that logarithmic corrections naturally arise when the waiting-time
mechanism contains higher-order components. In a negative-binomial extension of
the IIM, where the realized price is observed at the $n$-th successful trading
opportunity rather than the first one, the Pareto exponent is unchanged, but the
tail carries a logarithmic correction of order $n-1$. The ITM generates an
analogous phenomenon through generalized Erlang trade times and boundary cases
in which the smallest rate appears with multiplicity greater than one. Analogous
results hold for the lower tail of $P_T$, that is, for the probability of small
realized prices, by passing to the dual process $-X$.

The paper is related to existing explanations of heavy tails in specific markets,
including \cite{gabaix2003} and \cite{warusawitharana2018}. Our contribution differs
in emphasis. Rather than building a market-specific institutional model, we isolate
a general probabilistic channel through which random timing and heterogeneous trading
behavior produce heavy-tailed realized prices.

The remainder of the paper is organized as follows. Section \ref{smain} introduces the
Markov-modulated L\'{e}vy framework and the trade-timing models. Section \ref{siim}
studies the intertrade-incidence model, and Section \ref{sitm} studies the
intertrade-time model. The appendices collect the matrix and Tauberian results used
in the proofs.

\section{Model and preliminaries} \label{smain}
    \subsection{Model and assumptions} \label{smain1}
This subsection introduces the latent price process, the random trade-timing mechanisms, and the cross-sectional heterogeneity that drive the tail results.

\subsubsection{Latent price dynamics}  \label{smain1a}
Let $P_t$ denote the latent commodity price at time $t$. If $0 = T_0 < T_1 < \cdots < T_{K-1} < T_K$ are transaction times, we write the realized gross price change between two consecutive transactions as
\begin{align} \label{priceeq}
P_{T_k \mid T_{k-1}} = P_{T_{k-1}} e^{X_{T_k-T_{k-1}}}, \qquad k=1,\ldots,K,
\end{align}
where $(X_t,t\ge 0)$ is the log-price process. Since the paper is concerned with the tail behavior of realized price changes, we normalize $P_{T_{k-1}}=1$ whenever convenient and focus on the distribution of $e^{X_{T_k-T_{k-1}}}$.

Throughout, $(X_t,t\ge 0)$ is assumed to be a Markov-modulated L\'{e}vy process. We only briefly summarize the setup here; see \citet[Chapter XI]{asmussen2003} for a detailed treatment. Let $\mathcal E = \{1,\ldots,N\}$ and let $(X,J)=((X_t,J_t), t\in\mathbb R_+)$ be a bivariate Markov process on $\mathbb R \times \mathcal E$ with $P(X_0=0)=1$. The increments of $X$ are governed by the latent Markov chain $J$ in the sense that
\begin{align}
E\left[f(X_{t+s}-X_t) g(J_{t+s}) \mid \mathcal F_t\right]
= E\left[f(X_s-X_0) g(J_s) \mid J_0\right]
\label{maplaw}
\end{align}
for all $s,t\in\mathbb R_+$ and all nonnegative measurable functions $f$ and $g$. Let $R_t$ denote the transition matrix of $J$, and let $G=(g_{jk})$ denote its infinitesimal generator,
\[
G = \lim_{h\to 0+}\frac{R_h-I}{h}.
\]
We assume that $G$ is irreducible.

Conditional on $J_t=j$, the process $X$ evolves on $[t,t+s)$ as a L\'{e}vy process $X^{(j)}=(X_t^{(j)},t\ge 0)$ with exponent
\begin{align*}
\psi_j(z)
= \mu_j z + \frac{1}{2}\sigma_j^2 z^2 + \int_{-\infty}^{\infty}
\left(e^{zx}-1-zx\mathbbm{1}(|x|\le 1)\right)\nu_j(dx),
\end{align*}
for $z\in \mathcal S_j \equiv \{z\in\mathbb C : E[e^{\RRe(z)X_1^{(j)}}] < \infty\}$.

Additional jumps at transition epochs of $J$ are also allowed. Specifically, when $J$ jumps from $j$ to $k\neq j$, the process $X$ jumps with probability $\varrho_{jk}$, and the jump size is an iid random variable $U_{jk}$. We assume that $U_{jk}$ is independent of $J$ and of $(X^{(j)}, j\in\mathcal E)$, and that $\varrho_{jj}=0$ and $U_{jj}\equiv 0$ for all $j$. Thus a regime switch may or may not induce an additional jump in $X$; when $\varrho_{jk}=1$ and $U_{jk}$ is degenerate, one recovers deterministic level shifts at regime changes.

Let $\Psi(z)$ denote the diagonal matrix with diagonal entry $\psi_j(z)$ in state $j$, and let $\Upsilon(z)$ denote the matrix with $(j,k)$ entry $E[e^{zV_{jk}}]$, where $V_{jk}=U_{jk}$ with probability $\varrho_{jk}$ and $V_{jk}=0$ with probability $1-\varrho_{jk}$. Then Proposition 2.2 of Chapter XI in \citet{asmussen2003} implies that
\begin{align}
E\left[e^{zX_t}\mathbbm{1}(J_t=k) \mid J_0=j\right]
= \left(e^{A(z)t}\right)_{jk},
\qquad
A(z)=G\odot \Upsilon(z)+\Psi(z),
\label{assu1}
\end{align}
whenever $E[e^{zX_t}]$ exists. The matrix $A(z)$ is the matrix Laplace exponent of the Markov-modulated L\'{e}vy process. Because $G$ is irreducible, the same is true of $A(z)$ on its domain.

Markov modulation allows realized price changes to differ even when the elapsed time between transactions is the same. Without modulation, the law of $P_{T_k\mid T_{k-1}}$ would depend only on $T_k-T_{k-1}$. With modulation, the current regime matters as well:
\begin{align}
\PP(P_{T_k} \mid P_{T_{k-1}}=1, J_{T_{k-1}}=j)
= \PP(e^{X_{T_k-T_{k-1}}} \mid J_0=j).
\label{eqmmlevy}
\end{align}
Hence heterogeneity in realized price changes may arise both from the elapsed trading time and from the latent state at the beginning of the interval. For this reason, it is enough to study the one-step realized price
\begin{align} \label{priceeq0}
P_T = e^{X_T},
\end{align}
where the initial distribution of $J_0$ is an arbitrary probability vector $w_0$.

\subsubsection{Random trade timing} \label{smain1b}
Equation \eqref{assu1} describes the latent price dynamics, but the object of interest is the realized price observed when the next trade occurs. We therefore model the time to the next trade as a random variable $T$ and refer to its law as the intertrade-time distribution. Conceptually, the role of $T$ is analogous to a waiting-time distribution in queueing models: prices evolve continuously in latent time, but observation occurs only when a transaction arrives.

We consider two environments.
\begin{itemize}
    \item[(i)] In the intertrade-incidence model (IIM),
    \begin{align} \label{iim}
    T \sim \text{Geo}(p),
    \end{align}
where $\mathrm{Geo}(p)$ denotes the geometric distribution taking integer values $t\ge 1$ with probability $(1-p)^{t-1}p$.
    \item[(ii)] In the intertrade-time model (ITM),
    \begin{align} \label{itm}
    T = T_{\lambda} + \sum_{j=1}^{m-1} T_{\nu_j},
    \end{align}
    where $T_{\lambda} \sim \Exp(\lambda)$, $T_{\nu_j} \sim \Exp(\nu_j)$, and all components are independent.
\end{itemize}

In the IIM, trades occur on a discrete grid, and a trade takes place at each grid point with probability $p$. Thus the waiting time to the next trade is geometrically distributed. This specification is natural for markets in which trading opportunities are themselves discrete by institutional design---for example, periodic call auctions, commodity markets that open at fixed intervals, or order-book markets in which economically meaningful price changes occur only at scheduled clearing events. In such settings, each grid point represents the next moment at which a trade can take place, and no separate arrival-and-completion decomposition is needed.

In the ITM, $T_{\lambda}$ is the waiting time to the arrival of the next potential buyer, modeled as the first arrival time of a Poisson process with intensity $\lambda$. The additional term $\sum_{j=1}^{m-1} T_{\nu_j}$ captures potential post-arrival delays required to complete the transaction. Because the sum of independent Erlang variables is generalized Erlang
\citep{dehon1982}, this specification remains tractable even when the completion process consists of several heterogeneous exponential stages. The benchmark case studied in \citet{bst2018} is nested as the special case with no additional waiting stage, namely $\sum_{j=1}^{m-1} T_{\nu_j}=0$. The use of Erlang waiting times to capture multi-stage purchase or transaction processes is studied in the literature; see \citet{chatfield1973} for an early treatment of consumer purchasing with Erlang inter-purchase times.

\subsubsection{Heterogeneity across trading types} \label{smain1c}
The two trade-timing models in Section \ref{smain1b} are stated with a single trade probability $p$ in the IIM and a single arrival intensity $\lambda$ in the ITM. To accommodate cross-sectional heterogeneity, we replace these single parameters by a finite mixture over latent trading types.

Let $L\in\{1,\ldots,\tau\}$ denote the trading type, with
\[\PP(L=j)=q_j,\qquad j=1,\ldots,\tau,\]
where $q_j>0$ for every $j$. Conditional on $L=j$, the trade-time distribution is governed by a type-specific parameter, while all other model components are common across types. This mixture representation keeps the model tractable while allowing the observed distribution of trade times to reflect heterogeneity across buyers or market conditions.

In the IIM, heterogeneity operates through the trade probability $p$. Specifically, conditional on $L=j$, the trade probability is $p_j$, where
\[
\Pi=\{p_1,\ldots,p_\tau\},\qquad 0<p_1<\cdots<p_\tau<1.
\]
Thus, in the geometric IIM,
\begin{equation*} 
T\mid\{L=j\}\sim \mathrm{Geo}(p_j),
\end{equation*}
and hence the unconditional law $\mathcal L(T)$ of $T$ is the finite mixture
\begin{equation}\label{iimadd} 
\mathcal L(T)=\sum_{j=1}^{\tau}q_j\,\mathcal L(\mathrm{Geo}(p_j)).
\end{equation}
This finite-mixture representation is in line with the empirical marketing literature on heterogeneity in purchase-frequency models; see \citet{kaplon2010}. In Section \ref{siim} we also consider an extension of the IIM in which the geometric waiting time is replaced by a negative-binomial waiting time, retaining the same type structure with $p_j$ as the success probability for type $j$.

In the ITM, heterogeneity operates through the arrival intensity $\lambda$, while the post-arrival completion rates $(\nu_1,\ldots,\nu_{m-1})$ are common across types. Conditional on $L=j$, the arrival intensity is $\lambda_j$, where
\[
\Lambda=\{\lambda_1,\ldots,\lambda_\tau\},\qquad
0<\lambda_1<\cdots<\lambda_\tau.
\]
Thus
\[
T\mid\{L=j\} \stackrel{d}{=} T_{\lambda_j}+\sum_{k=1}^{m-1}T_{\nu_k},
\]
where $T_{\lambda_j}\sim\Exp(\lambda_j)$, $T_{\nu_k}\sim\Exp(\nu_k)$, and all components are independent. Consequently, the unconditional law $\mathcal L(T)$ of $T$ is again a finite mixture over types:
\begin{equation}\label{itmadd}
\mathcal L(T)=\sum_{j=1}^{\tau}q_j\,\mathcal L\!\left(T_{\lambda_j}+\sum_{k=1}^{m-1}T_{\nu_k}\right).
\end{equation}

\subsection{Preliminary results on Markov-modulated L\'{e}vy processes} \label{sec_pre_markov}
In this subsection, we introduce notation and briefly summarize useful results for the subsequent discussion.

Let $\Sigma(\mathbb{A})$ denote the spectrum of a square matrix $\mathbb{A}$. We define
\begin{align}
&\rho(\mathbb{A}) = \max_j\{ |r_j| : r_j \in \Sigma(\mathbb{A}) \}, \\
&\tau(\mathbb{A}) =\max_j\{ \RRe(r_j) : r_j \in \Sigma(\mathbb{A}) \},
\end{align}
where $\rho(\mathbb{A})$ (resp.\ $\tau(\mathbb{A})$) is called the spectral radius (resp.\ spectral abscissa) of $\mathbb{A}$. We also let $r_D(\mathbb{A})$ denote the real eigenvalue of $\mathbb{A}$ that equals $\tau(\mathbb{A})$ when such an eigenvalue exists; we call $r_D(\mathbb{A})$ the dominant real eigenvalue. Moreover, we say that a square matrix $\mathbb{A}$ is Metzler if and only if $\upsilon I + \mathbb{A}$ is a nonnegative matrix for some $\upsilon$. Appendix \ref{appmetzler} collects properties of Metzler matrices that are repeatedly used in the subsequent discussion.

For convenience, we define the following set on which the MGF of $X_t$ is well defined for all $t>0$:
\begin{align}
\mathcal I = \left\{ s \in \mathbb{R} :  E\left[e^{sX_t} \mid J_0 = j\right] < \infty, \quad \forall j \in \mathcal E \right\}.
\end{align}

We introduce a preliminary result that characterizes the MGF of a Markov-modulated L\'{e}vy process on $\mathcal I$.
\begin{lemma}\label{lemmain}
Let $(X_t, t\geq 0)$ be a Markov-modulated L\'{e}vy process with the matrix Laplace exponent $A(z)$, described in Section \ref{smain1a}. Let $1_N$ denote the $N \times 1$ vector of ones, and let $w_0=(w_{01},w_{02},\ldots,w_{0N})'$ denote the initial distribution of $J$, i.e., $w_{0,j}=\PP(J_0 = j)$ for $j = 1,\ldots,N$. Then
\begin{align} \label{eqlem001}
M_t(s) \coloneqq E[e^{sX_t}]= w_0' \, e^{A(s)t} \, 1_N, \qquad s \in \mathcal I.
\end{align}
\end{lemma}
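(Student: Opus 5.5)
The plan is to reduce the statement to the conditional moment formula \eqref{assu1}, combined with the law of total expectation over the initial state $J_0$ and the terminal state $J_t$. Fix $s\in\mathcal I$ and $t>0$. By the definition of $\mathcal I$, $E[e^{sX_t}\mid J_0=j]<\infty$ for every $j\in\mathcal E$. Hence $E[e^{sX_t}]=\sum_j w_{0j}E[e^{sX_t}\mid J_0=j]$ is finite, so the existence condition under which \eqref{assu1} holds (Proposition 2.2 of Chapter XI in \citet{asmussen2003}) is met.

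First, I decompose over the terminal regime. Since $\sum_{k\in\mathcal E}\mathbbm{1}(J_t=k)=1$ almost surely and all terms are nonnegative, monotone convergence (here only finite additivity) gives
\[
E\left[e^{sX_t}\mid J_0=j\right]=\sum_{k=1}^N E\left[e^{sX_t}\mathbbm{1}(J_t=k)\mid J_0=j\right]=\sum_{k=1}^N\left(e^{A(s)t}\right)_{jk}=\left(e^{A(s)t}1_N\right)_j ,
\]
where the middle equality is \eqref{assu1}. Next, I average over the initial distribution: $E[e^{sX_t}]=\sum_{j}w_{0j}\left(e^{A(s)t}1_N\right)_j=w_0'e^{A(s)t}1_N$, which is \eqref{eqlem001}. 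For $t=0$ the identity is trivial because $X_0=0$ and $e^{0}=I$.

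The only delicate point is making sure \eqref{assu1} legitimately applies for real $s\in\mathcal I$, that is, that $A(s)$ is well defined there. Concretely, $\psi_j(s)$ must be finite for each $j$, and the entries $E[e^{sV_{jk}}]$ must be finite whenever $g_{jk}>0$. I would argue this from finiteness of $E[e^{sX_t}\mid J_0=j]$ as follows.

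For $\psi_j(s)$: on the event that $J$ stays in state $j$ up to time $t$, which has positive probability $e^{g_{jj}t}$, $X_t$ equals $X^{(j)}_t$ in law. Independence then gives $E[e^{sX^{(j)}_t}]<\infty$, hence $s\in\mathcal S_j$.

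For the transition jumps: on the event that exactly one transition $j\to k$ occurs before $t$, which has positive probability when $g_{jk}>0$ by irreducibility, $X_t$ is a sum of independent pieces including $V_{jk}$. Finiteness of the overall exponential moment therefore forces $E[e^{sV_{jk}}]<\infty$.

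With these checks in place, the matrix exponential representation holds, and the lemma follows directly.
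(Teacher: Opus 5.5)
Your proof is correct and follows the paper's argument: sum \eqref{assu1} over the terminal state $J_t=k$ to get the vector of conditional MGFs as $e^{A(s)t}1_N$, then average against $w_0$. Your extra check that $A(s)$ is finite on $\mathcal I$ is sound and goes beyond the paper, which simply invokes \eqref{assu1} whenever the moment exists; note, though, that the one-transition event has positive probability because $g_{jk}>0$ itself, not because of irreducibility.
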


\begin{proof}
For $s \in \mathcal I$, $E[e^{sX_t} \mid J_0 = j] < \infty$ for all $j$. Since the $(j,k)$-th element of $e^{A(s)t}$ is $E[e^{s X_t}\mathbbm{1}(J_t=k) \mid J_0 = j]$, we obtain
\begin{align}
\left( \begin{matrix} E\left[e^{sX_t} \mid J_0 = 1 \right] \\ \vdots \\ E\left[e^{sX_t} \mid J_0 = N \right]  \end{matrix}\right) = e^{A(s)t} \, 1_N. \label{eqcondi1}
\end{align}
From \eqref{eqcondi1}, \eqref{eqlem001} is easily established.
\end{proof}

The next lemma collects properties of the dominant real eigenvalue $r_D(A(s))$ that will be used repeatedly in Sections \ref{siim} and \ref{sitm}.
\begin{lemma}\label{lemmain2}
Let the assumptions in Lemma \ref{lemmain} hold. The Laplace exponent $A(s)$ of $(X_t, t\geq 0)$ satisfies the following:
\begin{enumerate}[label=(\roman*), ref=(\roman*)]
    \item \label{lemmain2a} $r_D(A(s))$ exists for all $s \in \mathcal I$.
    \item \label{lemmain2b} $r_D(A(0)) = 0$.
    \item \label{lemmain2c} $r_D(A(s))$ is convex in $s \in \mathcal I$.
\end{enumerate}
\end{lemma}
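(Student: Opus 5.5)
The plan is to read off all three properties from the Metzler structure of $A(s)$ for real $s\in\mathcal I$ and from the representation in Lemma \ref{lemmain}. Fix a real $s\in\mathcal I$. The off-diagonal entries of $A(s)=G\odot\Upsilon(s)+\Psi(s)$ are $g_{jk}E[e^{sV_{jk}}]$ for $j\neq k$. These are nonnegative, since $g_{jk}\ge 0$ and the moment generating function is positive, and they are finite because $s\in\mathcal I$. They are strictly positive exactly when $g_{jk}>0$. The diagonal entries $g_{jj}+\psi_j(s)$ are real and finite. Hence $A(s)$ is an irreducible Metzler matrix, and $\upsilon I+A(s)$ is nonnegative and irreducible for large $\upsilon$.

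For \ref{lemmain2a}, apply Perron--Frobenius to $\upsilon I+A(s)$. It has a real simple eigenvalue equal to its spectral radius, with a strictly positive eigenvector. Shifting back by $\upsilon$ gives a real eigenvalue of $A(s)$ equal to $\tau(A(s))$, and this is $r_D(A(s))$. This is the standard Metzler fact collected in Appendix \ref{appmetzler}.

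For \ref{lemmain2b}, note that $\Upsilon(0)$ is the all-ones matrix and $\Psi(0)=0$, so $A(0)=G$. Since $G$ is a generator, $G1_N=0$, so $0$ is an eigenvalue with the positive eigenvector $1_N$. By Perron--Frobenius, a nonnegative eigenvector of an irreducible Metzler matrix can only correspond to the dominant eigenvalue. Hence $r_D(G)=0$.

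For \ref{lemmain2c}, I would use the identity
\[
r_D(A(s))=\lim_{t\to\infty}\frac1t\log\bigl(w_0'e^{A(s)t}1_N\bigr)=\lim_{t\to\infty}\frac1t\log M_t(s),
\]
taken with $w_0$ having strictly positive entries, for instance uniform. This identity holds because $e^{A(s)t}$ is a strictly positive matrix for $t>0$ when $A(s)$ is irreducible Metzler. Its dominant part is $e^{r_D t}vu'$, with Perron vectors $u,v>0$, and the remaining terms grow at a strictly smaller exponential rate, so $w_0'e^{A(s)t}1_N\asymp e^{r_D(A(s))t}$. Next, for each fixed $t$, the map $s\mapsto\log M_t(s)$ is the log-MGF of $X_t$ restricted to the interval $\mathcal I$, and it is convex by H\"older's inequality. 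A pointwise limit of convex functions scaled by $1/t$ is convex, which gives convexity of $r_D(A(s))$ on $\mathcal I$.

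The main obstacle is making the limit identity rigorous. This requires the simplicity and strict dominance of the Perron root, together with the positivity of the Perron vectors. An alternative route is Kingman's theorem that the log of the spectral radius of a matrix whose entries are log-convex is itself log-convex. One would apply it to $e^{A(s)}$: its entries $E[e^{sX_1}\mathbbm{1}(J_1=k)\mid J_0=j]$ are log-convex in $s$ by H\"older, and $\rho(e^{A(s)})=e^{r_D(A(s))}$. I would present the Kingman route as the primary argument if the appendix allows citing it, since it avoids the asymptotics altogether.
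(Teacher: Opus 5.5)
Your proposal is correct. For (i) and (ii) it matches the paper. Both use that $A(s)$ is irreducible Metzler and that $A(0)=G$. You certify $r_D(G)=0$ through the positive eigenvector $1_N$ with $G1_N=0$, whereas the paper invokes the fact that all other eigenvalues of an irreducible generator have strictly negative real part; either justification is fine.

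The real difference is in (iii). The paper does not prove convexity; it imports it from Proposition 3.1 of \citet{bst2018}, after noting that $r_D(A(s))=\tau(A(s))$. Your Kingman route is a short self-contained replacement for that citation, apart from citing Kingman's theorem itself. It uses the positive entries of $e^{A(s)}$, which are log-convex by H\"older, together with $\rho(e^{A(s)})=e^{\tau(A(s))}=e^{r_D(A(s))}$.

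Your limit route is fully elementary, and the step you flag as the main obstacle needs neither a spectral expansion nor strict dominance of the Perron root. Let $v>0$ be a right eigenvector of $A(s)$ for $r_D(A(s))$ (Lemma \ref{metzlerthm}-\ref{metzlerthmd}), and choose $0<c\le C$ with $cv\le 1_N\le Cv$ componentwise. Then $e^{A(s)t}\ge 0$ and $e^{A(s)t}v=e^{r_D(A(s))t}v$ give $c\,e^{r_D(A(s))t}\,w_0'v\le w_0'e^{A(s)t}1_N\le C\,e^{r_D(A(s))t}\,w_0'v$. Since $w_0'v>0$ for every probability vector $w_0$, this yields $t^{-1}\log M_t(s)\to r_D(A(s))$ for the paper's own $w_0$, with no need to choose it strictly positive.

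As for what each approach buys: the citation keeps the paper short, while your argument makes the lemma self-contained. It also shows that convexity of $r_D(A(\cdot))$ is simply inherited from convexity of the log-MGF of $X_t$.
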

\begin{proof}
Note that $A(s)$ is Metzler. Then from Lemma \ref{metzlerthm}-\ref{metzlerthma}, we may easily deduce that the dominant real eigenvalue exists for all $s \in \mathcal I$; this proves \ref{lemmain2a}.

Observe that $A(0) = G$. Since $G$ is an irreducible generator matrix, $0$ is an eigenvalue of $G$ and all other eigenvalues have strictly negative real parts. Therefore, $0$ is the dominant real eigenvalue of $A(0)$, which proves \ref{lemmain2b}.

\ref{lemmain2c} is a direct consequence of Proposition 3.1 in \citet{bst2018}, given that $r_D(A(z))$ coincides with the spectral abscissa $\tau(A(z))$ of $A(z)$.
\end{proof}

Throughout, we will employ the following assumption on the Laplace exponent $A(z)$.
\begin{assumption}[Holomorphy around the candidate pole]\label{asshol}
Whenever $\alpha>0$ solves an equation of the form $r_D(A(-\alpha))=c$ for some constant $c>0$ used below, the matrix exponent $A(z)$ admits a holomorphic extension to an open connected neighborhood of the real interval $[-\alpha,0]$.
\end{assumption}

\begin{remark}
Assumption \ref{asshol} is imposed only to justify the simple-pole argument in the subsequent discussion (see Appendix \ref{appsimpole}). In particular, it ensures that the matrix-valued functions constructed there are holomorphic on a complex neighborhood of the candidate pole. This condition is not restrictive; a sufficient condition, which can replace Assumption \ref{asshol} in the subsequent discussion, is the existence of an open strip
\[
S=\{z\in\mathbb C : a< \RRe(z)<b\}, \qquad [-\alpha,0]\subset S,
\]
such that $S$ is contained in the domain of each regime-specific L\'evy exponent
$\psi_j$ and each transition-jump transform
\[
\upsilon_{jk}(z):=E[e^{zV_{jk}}], \qquad j,k\in\mathcal E.
\]
Then each entry of $\Upsilon(z)$ and each diagonal entry of $\Psi(z)$ is holomorphic on $S$. Since $\mathbb C^{N\times N}$ is finite dimensional, a matrix-valued function is holomorphic if and only if each of its entries is holomorphic. Hence $A(z)=G\odot \Upsilon(z)+\Psi(z)$ is holomorphic on $S$. Because $S$ is open and connected and contains the real interval $[-\alpha,0]$, Assumption \ref{asshol} follows.
\end{remark}
\section{Tail behavior in the intertrade-incidence model} \label{siim}

We first consider the IIM. The main specification is the geometric waiting time in \eqref{iim}. After establishing the geometric case, we extend the analysis to a negative-binomial waiting time, in which the realized price is observed at the $n$-th successful trading opportunity rather than the first. As we show, the same boundary equation determines the tail exponent, but the simple pole at the boundary becomes a pole of order $n$, and the tail acquires a logarithmic correction of order $n-1$.

Define
\begin{align} \label{iimset}
\mathcal I_{p_{1}} &= \left\{s \in \mathbb{R} : r_D(A(s)) < -\log(1-p_{1}) \right\}, \\
\mathcal S_{p_{1}} &= \left\{ z \in \mathbb{C} : \RRe(z) \in \mathcal I_{p_{1}} \right\}.
\end{align}
Clearly, $\mathcal I_{p_{1}} \subset \mathcal I$, and $M_t(s)$ is well defined for any $s \in \mathcal I_{p_{1}}$. Moreover, since
\[
\left|E\left[ e^{-z X_t}\right]\right| \le E\left[\left| e^{-z X_t}\right|\right] = E\left[ e^{-\RRe(z) X_t} \right],
\]
$M_t(z)$ is well defined and its modulus is bounded by $M_t(\RRe(z))$ for any $z \in \mathcal S_{p_{1}}$.

Our strategy to derive the tail behavior of $P_T$ is as follows. We first show that $M_T(s)$ is well defined on $\mathcal I_{p_{1}}$ and has a pole at the boundary point $s=-\alpha$, where $r_D(A(-\alpha)) = -\log(1-p_{1})$. Equivalently, $M_T(z)$ is well defined on $\mathcal S_{p_{1}}$ but has a pole somewhere on the line $\RRe(z)=-\alpha$. We then use the Tauberian results in Appendix \ref{apptauberian} to characterize the tail behavior of $X_T$, from which that of $P_T$ follows.

Following \citet{bst2018}, we say that an MGF is \emph{nonlattice} if it corresponds to a distribution that is not supported on a lattice.

\begin{proposition} \label{propmain}
Let $(X_t, t\ge 0)$ be a Markov-modulated L\'{e}vy process described in Section \ref{smain1a}, and let the trade time $T$, described in Sections \ref{smain1b}--\ref{smain1c}, satisfy \eqref{iim} and \eqref{iimadd}. Suppose that Assumption \ref{asshol} holds and that there exists a positive real number $\alpha$ such that $r_D(A(-\alpha)) = -\log(1-p_{1})$. Then $\alpha$ is unique, and the following hold.
\begin{enumerate}[label=(\roman*), ref=(\roman*)]
\item \label{propmaina} There exist constants $0<C_1\le C_2<\infty$ such that
\begin{align} \label{propmaineq1}
\widetilde{M}C_1 \le y^\alpha \PP(P_T>y) \le \widetilde{M}C_2
\end{align}
for all sufficiently large $y$; that is, $P_T$ has a Pareto-type tail.
\item \label{propmainb} Suppose, in addition, that either
\begin{enumerate}[label=(\alph*)]
\item the MGF $z\mapsto e^{\psi_j(z)}$ is nonlattice for some $j\in\mathcal E$, or
\item the transition-jump MGF $z\mapsto E[e^{zV_{jk}}]$ is nonlattice for some $j,k\in\mathcal E$ with $\varrho_{jk}>0$.
\end{enumerate}
In particular, condition (a) holds whenever $\sigma_j^2>0$ for some $j\in\mathcal E$. Then
\[
\lim_{y \to \infty} y^\alpha \PP(P_T>y) = \widetilde M/\alpha.
\]
\end{enumerate}
In \ref{propmaina} and \ref{propmainb}, $\widetilde{M}$ is positive and given by
\[
\widetilde{M} = \left(\frac{q_{1}p_{1}}{1-p_{1}}\right) \frac{(w_0' x_{p})(y_{p}'1_N)}{-\,y_{p}' B^{(1)}(-\alpha)x_{p}},
\]
where $B(s) = e^{\log(1-p_{1})I+A(s)}$, $B^{(1)}(s) = \frac{d}{ds}B(s)$, and $x_p$, $y_p$ are right and left eigenvectors of $B(-\alpha)$ associated with the unit eigenvalue.
\end{proposition}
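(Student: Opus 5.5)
Conditioning on the trade time and the trading type gives an explicit formula for the MGF of $X_T$. By Lemma \ref{lemmain} and \eqref{iimadd}, for real $s$,
\[
M_T(s)=\sum_{j=1}^{\tau} q_j\sum_{t\ge1}(1-p_j)^{t-1}p_j\, w_0'e^{A(s)t}1_N
=\sum_{j=1}^{\tau}\frac{q_jp_j}{1-p_j}\, w_0' B_j(s)\bigl(I-B_j(s)\bigr)^{-1}1_N ,
\]
where $B_j(s)=e^{\log(1-p_j)I+A(s)}$. The Neumann series converges as long as $\rho(B_j(s))<1$.

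\textbf{Step 1: spectral radius of $B_j(s)$.} Since $A(s)$ is an irreducible Metzler matrix, $e^{A(s)}$ is a positive matrix. Its Perron root is $e^{r_D(A(s))}$. So the series converges exactly when $r_D(A(s))<-\log(1-p_j)$. Because $p_1$ is the smallest probability, this condition is most restrictive for $j=1$, and its region is $\mathcal I_{p_1}$.

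\textbf{Step 2: uniqueness of $\alpha$.} By Lemma \ref{lemmain2}, $r_D(A(\cdot))$ is convex and vanishes at $0$. Also $-\log(1-p_1)>0$. Hence on $(-\infty,0]$ the level set $\{r_D=c\}$ with $c>0$ contains at most one point. Indeed, if there were two points $-\alpha_2<-\alpha_1<0$, convexity through $(0,0)$ would force $r_D(A(-\alpha_1))<c$. This gives uniqueness, and it shows that $(-\alpha,0]\subset\mathcal I_{p_1}$.

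\textbf{Step 3: the pole at $-\alpha$.} The terms with $j\ge2$ remain analytic in a neighbourhood of $\RRe z=-\alpha$, since there $r_D(A(-\alpha))<-\log(1-p_j)$. By Assumption \ref{asshol}, $B(z)=B_1(z)$ is holomorphic near $-\alpha$, and $1$ is a simple Perron eigenvalue of $B(-\alpha)$. Perturbation of this simple eigenvalue, as in Appendix \ref{appsimpole}, gives
\[
(I-B(z))^{-1}=\frac{x_py_p'}{-y_p'B^{(1)}(-\alpha)x_p\,(z+\alpha)}+\text{holomorphic},
\]
with $y_p'x_p=1$. The derivative $y_p'B^{(1)}(-\alpha)x_p$ equals $e^{-\alpha\cdot}\!$-scaled $\frac{d}{ds}e^{r_D(A(s))}$ times $(1-p_1)$. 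It is strictly negative because $r_D(A(\cdot))$ is convex, equals $0$ at $0$, and is positive at $-\alpha$, so it is strictly decreasing at $-\alpha$. Using $B(-\alpha)x_p=x_p$, the residue of $M_T$ at $-\alpha$ is $\widetilde M$. Positivity of $\widetilde M$ follows from the positivity of the Perron vectors and $w_0\ne0$.

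\textbf{Step 4: Tauberian argument.} Write $\PP(P_T>y)=\PP(X_T>\log y)$ and use $E[e^{-zX_T}]$ in the paper's orientation. The Tauberian results of Appendix \ref{apptauberian} then apply: the transform is analytic on the strip to the right of the boundary, and it has a simple pole with residue $\widetilde M$ at the real boundary point. This yields the Pareto-type bounds (i). For (ii), I would show that $-\alpha$ is the only singularity on the line $\RRe z=-\alpha$. This needs $\rho(B(-\alpha+\ii u))<1$ for $u\ne0$, which I expect to be the main obstacle. The plan is to use $|e^{A(-\alpha+\ii u)}|\le e^{A(-\alpha)}$ entrywise. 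Equality of spectral radii would force every entry $e^{\psi_j}$ or $E[e^{zV_{jk}}]$ along the relevant paths to have modulus equal to its real counterpart. Such equality is a lattice property, which is excluded by (a) or (b); when $\sigma_j^2>0$ the Gaussian factor already makes the modulus strictly smaller. The Wiener–Ikehara-type theorem then gives $y^\alpha\PP(P_T>y)\to\widetilde M/\alpha$.
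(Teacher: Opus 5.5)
Your Steps 1--3 are sound and essentially reproduce the paper's argument. You use the same conditioning formula for $M_T(s)=E[e^{sX_T}]$; your $B_j(I-B_j)^{-1}$ is the paper's $(I-B_j)^{-1}-I$. The convexity argument for uniqueness and the residue at $s=-\alpha$ are also the same. Your simple-pole argument perturbs the simple Perron root $\lambda(s)=(1-p_1)e^{r_D(A(s))}$ of $B(s)$ and uses $\lambda'(-\alpha)=y_p'B^{(1)}(-\alpha)x_p/(y_p'x_p)=\tfrac{d}{ds}r_D(A(s))|_{s=-\alpha}\le -c/\alpha<0$, where $c=-\log(1-p_1)$. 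This is a valid, self-contained substitute for Lemma \ref{lemconvsimple}; your ``$e^{-\alpha\cdot}$-scaled'' phrase should be replaced by exactly this identity.

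The genuine gap is Step 4, which has the wrong orientation. The Tauberian results in Appendix \ref{apptauberian} concern $\varphi_Y(s)=E[e^{-sY}]$. A pole of $\varphi_Y$ at $s=-\alpha$ with convergence to its right is a singularity of $u\mapsto E[e^{uY}]$ at $u=+\alpha$, and that is what controls $\PP(Y>x)$. What you located is a pole of $s\mapsto E[e^{sX_T}]$ at $s=-\alpha$ with analyticity to its right. Since $E[e^{sX_T}]=E[e^{-s(-X_T)}]=\varphi_{-X_T}(s)$, Corollary \ref{cortau} applies to $Y=-X_T$. It therefore bounds $e^{\alpha x}\PP(X_T<-x)$, i.e.\ $y^{\alpha}\PP(P_T<y^{-1})$, which concerns \emph{small} realized prices. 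The transform governing $\PP(X_T>x)$ is $\varphi_{X_T}(s)=M_T(-s)$, whose singularity sits at $s=+\alpha$, where nothing has been shown. Your phrase ``use $E[e^{-zX_T}]$ in the paper's orientation'' is exactly where the two are conflated. The paper's own proof makes the same identification when it applies Corollary \ref{cortau} to $X_T$ with $M_T$ in place of $\varphi_{X_T}$.

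This cannot be patched, because the upper-tail claim is false as stated. Take the paper's own Example with $\sigma^2=0$ and $\mu<0$. Then $\alpha=\log(1-p_1)/\mu>0$ solves $r_D(A(-\alpha))=-\log(1-p_1)$, and Assumption \ref{asshol} holds trivially. Yet $P_T=e^{\mu T}\le e^{\mu}<1$ almost surely, so $\PP(P_T>y)=0$ for $y\ge 1$, contradicting the lower bound $\widetilde M C_1>0$. Likewise, for $\sigma^2>0$ and $\mu\neq 0$, the true upper-tail exponent is the positive root of $\psi(\alpha)=-\log(1-p_1)$, not of $\psi(-\alpha)=-\log(1-p_1)$; that positive root is in fact the formula the Example displays. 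What your argument actually proves is the lower-tail statement with this $\alpha$.

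For the upper tail, take the positive root of $r_D(A(\alpha))=-\log(1-p_1)$. Your Steps 1--3 then go through at $s=+\alpha$ with $\lambda'(\alpha)\ge c/\alpha>0$. The leading coefficient becomes $\lim_{s\uparrow\alpha}(\alpha-s)M_T(s)=\frac{q_1p_1}{1-p_1}(w_0'x_p)(y_p'1_N)/(y_p'B^{(1)}(\alpha)x_p)>0$, with $x_p,y_p$ now eigenvectors of $B(\alpha)$. Corollary \ref{cortau} is then applied to $\varphi_{X_T}(s)=M_T(-s)$.

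Your plan for (ii) is the right idea: entrywise domination $|e^{A(\theta+\ii u)}|\le e^{A(\theta)}$, plus the equality case of Wielandt's theorem forcing a lattice structure. It is the \citet{bst2018} argument the paper cites, and it transfers unchanged to the correctly oriented line $\RRe s=\alpha$.
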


\begin{proof}
The uniqueness of $\alpha$ satisfying $r_D(A(-\alpha)) = -\log(1-p_{1})$ follows from the convexity of $r_D(A(s))$ established in Lemma \ref{lemmain2}-\ref{lemmain2c}, together with $r_D(A(0))=0<-\log(1-p_1)$. Indeed, the sublevel set $\{s\in\mathcal I : r_D(A(s))\le -\log(1-p_1)\}$ is an interval containing $0$, so there can be at most one solution on the negative half-line, equivalently at most one positive $\alpha$.

\smallskip\noindent\textbf{Proof of \ref{propmaina}.}
Conditioning first on the latent trading type and then on the trade time, we obtain
\begin{align}
M_T(s) = E\left[e^{sX_T}\right] &= \sum_{j=1}^{\tau} q_j \, E\left[e^{sX_T}\mid p=p_j\right] \notag\\
&= \sum_{j=1}^{\tau} q_j \sum_{t=1}^{\infty} \PP(T=t\mid p=p_j)\, w_0' e^{A(s)t} 1_N \notag\\
&= \sum_{j=1}^{\tau} q_j \sum_{t=1}^{\infty} (1-p_j)^{t-1}p_j\, w_0' e^{A(s)t} 1_N \notag\\
&= \sum_{j=1}^{\tau}\frac{q_jp_j}{1-p_j}\, w_0' \left(\sum_{t=1}^{\infty}(1-p_j)^t e^{A(s)t}\right)1_N.
\label{eqinteg11}
\end{align}
The infinite sum $\sum_{t=1}^{\infty}(1-p_j)^t e^{A(s)t}$ in \eqref{eqinteg11} can be written as
\begin{align}
\sum_{t=1}^{\infty} e^{t\log(1-p_j)} e^{A(s)t} = \sum_{t=1}^{\infty} e^{(\log(1-p_j)I+A(s))t}. \label{eqinteg22}
\end{align}
Since $r_D(\log(1-p_j)I+A(s)) = \log(1-p_j) + r_D(A(s)) < 0$, all eigenvalues of $\log(1-p_j)I+A(s)$ have negative real parts for every $s \in \mathcal I_{p_{1}}$ and every $j = 1,\ldots,\tau$. This implies that $e^{(\log(1-p_j)I+A(s))t} \to 0$ as $t\to \infty$. Note that
\[
\bigl(I- e^{\log(1-p_j)I+A(s)}\bigr) \sum_{t=0}^{\ell} e^{(\log(1-p_j)I+A(s))t} = I - e^{(\log(1-p_j)I+A(s))(\ell+1)},
\]
where the right-hand side converges to $I$ as $\ell \to \infty$. Therefore
\begin{align} \label{eqinteg33}
\eqref{eqinteg22} = \left(I - e^{\log(1-p_j)I+A(s)}\right)^{-1} - I,
\end{align}
and hence
\begin{align} \label{eqinteg44}
M_T(s) = \sum_{j=1}^\tau \frac{q_jp_j}{1-p_j} w_0' \left(I- e^{\log(1-p_j)I+A(s)}\right)^{-1}1_N - \sum_{j=1}^\tau \frac{q_jp_j}{1-p_j} w_0' 1_N.
\end{align}

The inverse appearing in the first term of \eqref{eqinteg44} exists if and only if $e^{\log(1-p_j)I+A(s)}$ does not have a unit eigenvalue. By the spectral mapping theorem,
\[
\Sigma\left(e^{\log(1-p_j)I+A(s)}\right) = \exp\left(\Sigma\left(\log(1-p_j)I+A(s)\right)\right),
\]
where, as before, $\Sigma(\cdot)$ denotes the spectrum of its argument. Thus the inverse fails to exist precisely when $-\log(1-p_j)$ is an eigenvalue of $A(s)$ for some $j$. On $\mathcal I_{p_{1}}$, this never occurs. However, at $s=-\alpha$, the value $-\log(1-p_1)$ is an eigenvalue of $A(-\alpha)$ by hypothesis, so $M_T(s)$ has a pole at $s=-\alpha$.

To verify that this pole is simple, note from Assumption \ref{asshol} that $A(-z)$ is holomorphic on an open connected neighborhood of $[0,\alpha]$. Define
\[
F(z) \coloneqq e^{\log(1-p_1)I + A(-z)} - I, \qquad z \in [0,\alpha].
\]
For real $z$, the matrix $e^{A(-z)}$ has strictly positive entries (see, e.g., Theorem 3.12 in Chapter 6 of \citet{berman1994}); hence $F(z)$ is irreducible Metzler. By Lemma \ref{lemmain2}-\ref{lemmain2a}, the spectral mapping theorem, and the Perron--Frobenius theorem,
\[
r_D(F(z)) = e^{\log(1-p_1)+r_D(A(-z))}-1.
\]
Note that $z \mapsto r_D(A(-z))$ is convex (Lemma \ref{lemmain2}-\ref{lemmain2c}), and the map $x \mapsto e^{\log(1-p_1)+x}-1$ is strictly increasing and convex. These imply that $z\mapsto r_D(F(z))$ is also convex. Moreover,
\[
r_D(F(0)) = e^{\log(1-p_1)+r_D(A(0))}-1 = -p_1 < 0, \qquad r_D(F(\alpha)) = e^{\log(1-p_1)+r_D(A(-\alpha))}-1 = 0.
\]
Hence Lemma \ref{lemconvsimple} applies, showing that $F(z)^{-1}$ has a simple pole at $z=\alpha$. Since
\[
F(-s) = e^{\log(1-p_1)I+A(s)}-I = -\left(I-e^{\log(1-p_1)I+A(s)}\right),
\]
it follows that $\left(I-e^{\log(1-p_1)I+A(s)}\right)^{-1}$ has a simple pole at $s=-\alpha$.

Let $x_p$ and $y_p$ denote right and left eigenvectors of $B(-\alpha)$ associated with the unit eigenvalue. The residue is
\begin{align}
R \coloneqq \lim_{s\to -\alpha}(s+\alpha)\left(I-e^{\log(1-p_1)I+A(s)}\right)^{-1}
= x_p\left(-\,y_p' B^{(1)}(-\alpha)x_p\right)^{-1}y_p'
= c_p x_p y_p',
\end{align}
where $B^{(1)}(s)=\frac{d}{ds}e^{\log(1-p_1)I+A(s)}$ and $c_p = \left(-\,y_p' B^{(1)}(-\alpha)x_p\right)^{-1} > 0$; see Lemma \ref{lemsimpole}. Moreover, for all $j$ with $p_j > p_{1}$, $\left(I-e^{\log(1-p_{j})I+A(s)}\right)^{-1}$ is well defined and holomorphic at $s=-\alpha$ (see Lemma \ref{metzlerthm}-\ref{metzlerthmc}). Therefore
\begin{align}
\widetilde{M} \coloneqq \lim_{s\to -\alpha}(s+\alpha)M_T(s)
= \frac{q_{1}p_{1}}{1-p_{1}} (w_0' R 1_N)
= \frac{q_{1}p_{1}}{1-p_{1}} c_p (w_0' x_p) (y_p'1_N). \label{resideq1}
\end{align}
Since $x_p$ and $y_p$ have strictly positive entries and $w_0\neq 0$ has nonnegative entries, the right-hand side of \eqref{resideq1} is positive. Hence $M_T(s)$ has a simple pole at $s=-\alpha$.

It then follows from Corollary \ref{cortau} that there exist constants $C_1$ and $C_2$ satisfying
\begin{align}
\widetilde{M} C_1 \le \liminf_{x \to \infty} e^{\alpha x} \PP(X_T>x) \le \limsup_{x \to \infty} e^{\alpha x} \PP(X_T>x) \le \widetilde{M} C_2. \label{eqtau11}
\end{align}
That is, for any $\varepsilon > 0$, there exists $x_0$ such that
\begin{align}
\widetilde{M} C_1 - \varepsilon < e^{\alpha x} \PP(X_T>x) < \widetilde{M} C_2 + \varepsilon \label{eqtau22}
\end{align}
for all $x \ge x_0$. Substituting $y=e^x$ yields
\begin{align}
\widetilde{M} C_1 - \varepsilon < y^\alpha \PP(P_T>y) < \widetilde{M} C_2 + \varepsilon, \label{eqtau33}
\end{align}
which establishes \eqref{propmaineq1}.

\smallskip\noindent\textbf{Proof of \ref{propmainb}.}
Under either of the stated nonlattice conditions, the argument of Theorem 3.2 in \citet{bst2018} implies that
\[
\tau(A(-\alpha+\ii\beta)) < \tau(A(-\alpha)) \qquad \text{for all }\beta\neq 0.
\]
Therefore
\[
\rho\!\left((1-p_1)e^{A(-\alpha+\ii\beta)}\right) = (1-p_1)e^{\tau(A(-\alpha+\ii\beta))} < (1-p_1)e^{\tau(A(-\alpha))} = 1,
\]
so $I-(1-p_1)e^{A(-\alpha+\ii\beta)}$ is invertible for every $\beta\neq 0$. Hence $s=-\alpha$ is the unique singularity of $M_T(s)$ on its axis of convergence, and Corollary \ref{cortau} yields the desired result.
\end{proof}

Proposition \ref{propmain} shows that, for a latent Markov-modulated L\'{e}vy process with matrix Laplace exponent $A(s)$, the tail exponent $\alpha$ is characterized by the equation $r_D(A(-\alpha)) = -\log(1-p_{1})$. Hence the slowest-trading type---the type with the smallest trade probability $p_1$---determines the rate at which the upper tail decays. The corresponding population share $q_{1}$ enters the scale constant $\widetilde{M}$, while faster-trading types influence the asymptotic tail only through that scale. In this sense, the lower tail of the trading-frequency distribution is the key determinant of the upper tail of realized prices in the IIM.

The upper-tail results of this section can be extended to the lower-tail probability $\PP(P_T<y^{-1})$ with only straightforward modifications. Since an analogous extension applies to the ITM as well, we postpone the discussion to Remark \ref{remlowertail}.

\begin{example}
Suppose that $(X_t, t \ge 0)$ is a Brownian motion with drift. Then $A(s)=\psi(s)$, where
\[
\psi(s) = \mu s + \frac{1}{2}\sigma^2 s^2,
\]
so the dominant real eigenvalue is simply $r_D(A(s))=\psi(s)$. The equation determining the tail exponent becomes
\[
\psi(-\alpha) = -\log(1-p_1),
\]
which is quadratic in $\alpha$. If $\sigma^2>0$, the unique positive solution is
\[
\alpha = -\mu/\sigma^2 + \sqrt{\mu^2 - 2\sigma^2 \log(1-p_1)}/\sigma^2,
\]
and Proposition \ref{propmain}-\ref{propmainb} then implies
\[
y^{\alpha}\PP(P_T>y) \to \widetilde{M}/\alpha.
\]
If $\sigma^2=0$, a positive solution exists if and only if $\mu<0$, in which case $\alpha=\log(1-p_1)/\mu$. Proposition \ref{propmain}-\ref{propmaina} then yields, for sufficiently large $y$,
\begin{equation} \label{propmainex2}
\widetilde{M}C_1 \le y^\alpha \PP(P_T>y) \le \widetilde{M}C_2, \qquad \alpha = \log(1-p_1)/\mu.
\end{equation}
\end{example}

\subsection*{A negative-binomial extension}
We now consider an extension of the IIM in which the geometric waiting time in \eqref{iim} is replaced by a negative-binomial waiting time. Fix an integer $n\ge 1$, and suppose that conditional on $p=p_j$ (i.e., $L=j$),
\begin{equation}\label{eqnbinom}
\PP(T=t\mid p=p_j) = \binom{t-1}{n-1}p_j^n(1-p_j)^{t-n}, \qquad t=n,n+1,\ldots.
\end{equation}
Equivalently, $T$ is the index of the $n$-th successful trading opportunity on the discrete grid; the geometric IIM is the special case $n=1$.

\begin{cor}[Negative-binomial intertrade incidence]\label{cornegbin}
Maintain the assumptions on $X$ and the type distribution, and Assumption \ref{asshol}, in Proposition \ref{propmain}, but replace the conditional geometric law of $T$ by the negative-binomial law \eqref{eqnbinom}. Suppose that there exists $\alpha>0$ such that $r_D(A(-\alpha))=-\log(1-p_1)$. Then $\alpha$ is unique, and there exist constants $0<C_1\le C_2<\infty$ such that, for all sufficiently large $y$,
\[
\widetilde M_n C_1 \le (\log y)^{-(n-1)}y^\alpha \PP(P_T>y) \le \widetilde M_n C_2,
\]
where $\widetilde M_n>0$ is given by
\[
\widetilde M_n = q_1\left(\frac{p_1}{1-p_1}\right)^{n}\bigl(-\,y_{p}' B^{(1)}(-\alpha)x_{p}\bigr)^{-n}(y_p'x_p)^{n-1}(w_0'x_p)(y_p'1_N),
\]
with $B(s) = e^{\log(1-p_{1})I+A(s)}$, $B^{(1)}(s) = \frac{d}{ds}B(s)$, and $x_p$, $y_p$ right and left eigenvectors of $B(-\alpha)$ associated with the unit eigenvalue. In particular, the negative-binomial IIM has the same Pareto exponent as the geometric IIM, but for $n\ge 2$ the tail carries a logarithmic correction of order $n-1$.
\end{cor}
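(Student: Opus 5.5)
Uniqueness of $\alpha$ is proved exactly as in Proposition \ref{propmain}. It uses only the convexity of $r_D(A(s))$ (Lemma \ref{lemmain2}-\ref{lemmain2c}) and $r_D(A(0))=0<-\log(1-p_1)$, and it does not depend on the law of $T$. The plan is then to compute the MGF $M_T(s)$ in closed form, show that its first singularity on the negative real axis is a pole of order exactly $n$ at $s=-\alpha$, identify the leading Laurent coefficient, and transfer this to the tail of $X_T$ with a Tauberian result for higher-order poles from Appendix \ref{apptauberian}. The tail of $P_T$ then follows by the substitution $y=e^x$, as in \eqref{eqtau33}, using $x^{n-1}=(\log y)^{n-1}$.

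\textbf{Step 1 (closed form).} Conditioning on $L$ and $T$ as in \eqref{eqinteg11} gives
\[
M_T(s)=\sum_{j=1}^{\tau} q_j\Bigl(\tfrac{p_j}{1-p_j}\Bigr)^{n} w_0'\Bigl(\sum_{t\ge n}\tbinom{t-1}{n-1}B_j(s)^{t}\Bigr)1_N,
\qquad B_j(s)=e^{\log(1-p_j)I+A(s)}.
\]
On $\mathcal I_{p_1}$ the spectral radius of $B_j(s)$ is less than $1$. The matrix power series identity $\sum_{t\ge n}\binom{t-1}{n-1}B^{t}=B^{n}(I-B)^{-n}$, whose convergence follows from the spectral radius bound just as in \eqref{eqinteg33}, then gives
\[
M_T(s)=\sum_{j=1}^{\tau}q_j\Bigl(\tfrac{p_j}{1-p_j}\Bigr)^{n}w_0'B_j(s)^{n}(I-B_j(s))^{-n}1_N .
\]
For $j\ge 2$ the summands are holomorphic near $s=-\alpha$, by the argument in the proof of Proposition \ref{propmain} based on Lemma \ref{metzlerthm}-\ref{metzlerthmc}. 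So only the $j=1$ term matters.

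\textbf{Step 2 (pole order and coefficient).} The proof of Proposition \ref{propmain}, via Lemma \ref{lemconvsimple} and Lemma \ref{lemsimpole}, shows that $(I-B(s))^{-1}=R/(s+\alpha)+H(s)$ with $H$ holomorphic near $-\alpha$ and $R=c_p x_p y_p'$, where $c_p=(-y_p'B^{(1)}(-\alpha)x_p)^{-1}$. Raising to the $n$-th power, the coefficient of $(s+\alpha)^{-n}$ is $R^n=c_p^n (y_p'x_p)^{n-1}x_py_p'$. The factor $B(s)^n$ is holomorphic and satisfies $B(-\alpha)^n x_p=x_p$. It commutes with $(I-B)^{-1}$, so the leading coefficient of $B^n(I-B)^{-n}$ is $B(-\alpha)^nR^n=R^n$. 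Hence
\[
\lim_{s\to-\alpha}(s+\alpha)^n M_T(s)=q_1\Bigl(\tfrac{p_1}{1-p_1}\Bigr)^n c_p^n(y_p'x_p)^{n-1}(w_0'x_p)(y_p'1_N)=\widetilde M_n .
\]
This quantity is positive because $x_p,y_p$ are strictly positive Perron vectors and $c_p>0$. So the pole has order exactly $n$.

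\textbf{Step 3 (Tauberian transfer) and main obstacle.} $M_T$ is the two-sided MGF of $X_T$. It is analytic on the strip $\mathcal S_{p_1}$, and its abscissa of convergence on the left is $-\alpha$, because a nonnegative random variable's MGF is finite up to its first real singularity. I would apply the order-$n$ version of Corollary \ref{cortau}, which should read: if $\lim (s+\alpha)^nM(s)=\widetilde M_n$, then $\widetilde M_nC_1\le \liminf x^{-(n-1)}e^{\alpha x}\PP(X_T>x)$ and $\limsup x^{-(n-1)}e^{\alpha x}\PP(X_T>x)\le \widetilde M_nC_2$. This is an Ikehara--Graham--Vaaler type bound for poles of order $n$. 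I expect this to be the main obstacle. If the appendix only states the simple-pole case, I would reduce to it. One route is to use that the exponentially tilted tail integral has a transform behaving like $(s+\alpha)^{-n}$, and to compare it with the explicit function $x^{n-1}e^{-\alpha x}/(n-1)!$, whose transform has exactly this principal part. One then applies the Graham--Vaaler majorant/minorant argument to the difference, which has at most a pole of order $n-1$, and inducts on $n$. The constants $C_1,C_2$ absorb the factor $1/(n-1)!$ together with the Graham--Vaaler constants.
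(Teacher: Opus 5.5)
Your Steps 1--3 are essentially the paper's proof. Your $B_1(s)^n(I-B_1(s))^{-n}$ is the paper's $\bigl((I-\widetilde B_1(s))^{-1}-I\bigr)^n$. The leading Laurent coefficient $R^n=c_p^n(y_p'x_p)^{n-1}x_py_p'$ and the constant $\widetilde M_n$ are the same, and the paper also finishes with Corollary~\ref{cortau} at $\ell=n$. That corollary is already stated for poles of arbitrary order $\ell$ (it rests on Nakagawa's Theorem~5$^\ast$), so your fallback induction is unnecessary. It would also not go through as sketched: majorant/minorant Tauberian bounds need a positivity or monotonicity condition, and the difference between the tilted tail and $x^{n-1}e^{-\alpha x}/(n-1)!$ does not satisfy one.

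There is, however, a genuine error in Step 3, and the paper's last step has the same error. Corollary~\ref{cortau} is formulated for $\varphi_X(s)=E[e^{-sX}]$. Applying it to $X=X_T$ therefore requires a pole of $\varphi_{X_T}(s)=M_T(-s)$ at $s=-\alpha$, that is, a pole of $M_T$ at $+\alpha$. Your pole sits at the \emph{left} endpoint $s=-\alpha$ of the convergence strip of $M_T$, as you note yourself. That is exactly the pole required for $\varphi_{-X_T}=M_T$.

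So your computation proves $\widetilde M_nC_1\le x^{-(n-1)}e^{\alpha x}\PP(X_T<-x)\le\widetilde M_nC_2$ for large $x$. This bounds $\PP(P_T<y^{-1})$, not $\PP(P_T>y)$. The upper-tail claim is in fact false under the stated hypothesis. Take $N=1$ and $A(s)=\mu s$ with $\mu<0$. All assumptions hold, with $\alpha=\log(1-p_1)/\mu>0$ and $\widetilde M_n=q_1\bigl(p_1/(1-p_1)\bigr)^n(-\mu)^{-n}>0$. Yet $P_T=e^{\mu T}<1$ almost surely, so $\PP(P_T>y)=0$ for $y\ge 1$. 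The paper's own $\sigma^2=0$ example has the same defect.

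To get the upper tail, run the identical argument at the right endpoint:
\begin{itemize}
\item assume $r_D(A(\alpha))=-\log(1-p_1)$;
\item apply Lemma~\ref{lemconvsimple} to $B(z)-I$ on $[0,\alpha]$;
\item replace $-y_p'B^{(1)}(-\alpha)x_p$ by $y_p'B^{(1)}(\alpha)x_p$, with $x_p,y_p$ now eigenvectors of $B(\alpha)$.
\end{itemize}
With the hypothesis as stated, the correct conclusion is the lower-tail one.
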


\begin{proof}
Uniqueness of $\alpha$ follows from the same convexity argument as in Proposition \ref{propmain}. Define
\[
\widetilde B_j(s) \coloneqq (1-p_j)\,e^{A(s)}, \qquad j=1,\ldots,\tau,
\]
so that $\widetilde B_1(s) = B(s)$. For $s\in\mathcal I_{p_1}$, the spectral radius of $\widetilde B_j(s)$ is strictly less than one, so the matrix series $\sum_{u\ge 0}\binom{u+n-1}{n-1}\widetilde B_j(s)^u$ converges to $(I-\widetilde B_j(s))^{-n}$. Conditional on $p=p_j$,
\begin{align}
E[e^{A(s)T}\mid p=p_j]
&= \sum_{t=n}^{\infty}\binom{t-1}{n-1}p_j^n(1-p_j)^{t-n}e^{A(s)t} \notag\\
&= (p_j e^{A(s)})^n \sum_{u=0}^{\infty}\binom{u+n-1}{n-1}\widetilde B_j(s)^u \notag\\
&= (p_j e^{A(s)})^n (I-\widetilde B_j(s))^{-n}.
\label{eqnb1}
\end{align}
Using $e^{A(s)} = (1-p_j)^{-1}\widetilde B_j(s)$ together with the matrix identity $\widetilde B_j(s)(I-\widetilde B_j(s))^{-1} = (I-\widetilde B_j(s))^{-1}-I$ (valid since $\widetilde B_j(s)$ and $(I-\widetilde B_j(s))^{-1}$ commute), \eqref{eqnb1} can be rewritten as
\[
E[e^{A(s)T}\mid p=p_j] = \left(\frac{p_j}{1-p_j}\right)^{n}\bigl((I-\widetilde B_j(s))^{-1}-I\bigr)^{n}.
\]
Hence
\begin{equation}\label{eqnbMT}
M_T(s) = \sum_{j=1}^{\tau}q_j\left(\frac{p_j}{1-p_j}\right)^{n} w_0'\bigl((I-\widetilde B_j(s))^{-1}-I\bigr)^{n}1_N.
\end{equation}

As in the proof of Proposition \ref{propmain}, $I-\widetilde B_j(-\alpha)$ is invertible for every $j>1$, while $(I-\widetilde B_1(s))^{-1} = (I-B(s))^{-1}$ has a simple pole at $s=-\alpha$ with residue
\[
R = c_p x_p y_p', \qquad c_p = \bigl(-\,y_p' B^{(1)}(-\alpha)x_p\bigr)^{-1}>0.
\]
Consequently,
\[
(I-\widetilde B_1(s))^{-1} = \frac{R}{s+\alpha} + H(s),
\]
where $H(s)$ is holomorphic at $s=-\alpha$, so
\[
\bigl((I-\widetilde B_1(s))^{-1}-I\bigr)^{n} = \frac{R^{n}}{(s+\alpha)^{n}} + \text{lower-order terms}.
\]
Since $R^{n}=c_p^{n}(y_p'x_p)^{n-1}x_p y_p'$, and $y_p'x_p>0$ by the Perron--Frobenius positivity of $x_p$ and $y_p$, the leading Laurent coefficient of $M_T$ at $s=-\alpha$ is
\begin{align}
\widetilde M_n
&= \lim_{s\to -\alpha}(s+\alpha)^{n}M_T(s) \notag\\
&= q_1\left(\frac{p_1}{1-p_1}\right)^{n} c_p^{n}(y_p'x_p)^{n-1}(w_0'x_p)(y_p'1_N) \;>\; 0,
\label{eqnbresidue}
\end{align}
which matches the formula in the statement after substituting $c_p=(-\,y_p' B^{(1)}(-\alpha)x_p)^{-1}$. Thus $M_T(s)$ has a pole of order exactly $n$ at $s=-\alpha$.

The conclusion now follows from Corollary \ref{cortau} with $\ell=n$, exactly as in the final step of the proof of Proposition \ref{propmain}: there exist constants $C_1, C_2>0$ such that
\[
\widetilde M_n C_1 \le x^{-(n-1)}e^{\alpha x}\PP(X_T>x) \le \widetilde M_n C_2
\]
for all sufficiently large $x$, and substituting $y=e^x$ gives the stated bound.
\end{proof}

Corollary \ref{cornegbin} shows that logarithmic corrections, which make the tail of $P_T$ thicker, can already arise in a discrete-grid incidence model once the realized price is observed at the $n$-th successful trading opportunity rather than the first. In the transform calculation, the geometric waiting time contributes a single inverse $(I-B(s))^{-1}$, whereas the negative-binomial waiting time contributes its $n$-th power. This converts a simple pole into a pole of order $n$, and the Tauberian theorem translates the higher-order pole into the factor $(\log y)^{n-1}$ in the tail. Intuitively, observing $P_T$ only at the $n$-th successful opportunity introduces additional uncertainty about its value relative to the geometric IIM; that this added uncertainty actually reshapes the tail of $P_T$, however, is not obvious a priori, and is the content of Corollary \ref{cornegbin}. An analogous logarithmic correction reappears in the ITM in Section \ref{sitm}, where the trade-time distribution is generalized Erlang and boundary cases of the type structure produce comparable logarithmic factors.

   \section{Tail behavior in the intertrade-time model}\label{sitm}

In this section, we consider the tail behavior of $P_T$ under the ITM. Let $\lambda_{\min} = \min\{\lambda_1, \mu_1\}$ and define
\begin{align} \label{iimset2}
\mathcal I_{\lambda_{\min}} &= \left\{s \in \mathbb{R} : r_D(A(s)) < \lambda_{\min} \right\}, \\
\mathcal S_{\lambda_{\min}} &= \left\{ z \in \mathbb{C} : \RRe(z) \in \mathcal I_{\lambda_{\min}} \right\}.
\end{align}
We obtain the tail behavior of $P_T$ via an approach similar to that of Section \ref{siim}.

Under cross-sectional heterogeneity, $T$, conditional on $\lambda=\lambda_j$ (i.e., $L=j$), is the sum of $m$ independent exponential random variables $T_{\lambda_j}$ and $T_{\nu_h}$ for $h=1,\ldots,m-1$. It is possible that $\nu_h=\nu_{h'}$ for some $h\neq h'$. Let $(\mu_k, k=1,\ldots,d)$ be the distinct elements of $(\nu_h, h=1,\ldots,m-1)$, ordered so that $\mu_1<\cdots<\mu_d$, and let $r_k$ denote the multiplicity of $\mu_k$ in $(\nu_h, h=1,\ldots,m-1)$. For each type $j=1,\ldots,\tau$, let $T_j$ denote a random variable with the conditional law of $T$ given $\lambda=\lambda_j$. Then
\[
T_j \stackrel{d}{=} T_{\lambda_j}+\sum_{k=1}^{d}\Erl(\mu_k,r_k),
\]
where all summands are independent and $\Erl(\mu_k,r_k)$ denotes the Erlang distribution with rate parameter $\mu_k$ and shape parameter $r_k$. The law of $T$, denoted $\mathcal L(T)$, is then the finite mixture
\begin{align} \label{itm2}
\mathcal L(T) = \sum_{j=1}^{\tau}q_j\,\mathcal L(T_j).
\end{align}
Since the exponential variable $T_{\lambda_j}$ is itself $\Erl(\lambda_j,1)$, $T_j$ is a generalized Erlang random variable given by a sum of Erlang random variables with rate sequence $(\lambda_j,\mu_1,\ldots,\mu_d)$ and shape sequence $(1,r_1,\ldots,r_d)$. The subsequent analysis requires the probability density function of these sums.

If all rates are distinct, the generalized Erlang density expansion of Section \ref{sec_erlang} applies directly. If $\lambda_j$ coincides with one of the completion-stage rates, the equal-rate stages are first merged, and the same expansion is then applied. More explicitly, if $\lambda_j\neq \mu_k$ for all $k=1,\ldots,d$, the results of Section \ref{sec_erlang} imply that the probability density $f_{T_j}$ of $T_j$ is
\begin{equation}\label{eqerlang001}
f_{T_j}(t) = c_{1,1,j}\,e^{-\lambda_j t} + \sum_{k=2}^{d+1}\sum_{\ell=1}^{r_{k-1}} \frac{c_{k,\ell,j}}{(\ell-1)!}\, t^{\ell-1}e^{-\mu_{k-1}t}, \qquad t>0.
\end{equation}
If $\lambda_j=\mu_{k^\ast}$ for some $k^\ast\in\{1,\ldots,d\}$, then $T_j\stackrel{d}{=}\Erl(\mu_{k^\ast},r_{k^\ast}+1) + \sum_{k\neq k^\ast}\Erl(\mu_k,r_k)$, and the same results yield
\begin{equation}\label{eqerlang002}
f_{T_j}(t) = \sum_{\ell=1}^{r_{k^\ast}+1} \frac{\tilde c_{k^\ast,\ell,j}}{(\ell-1)!}\, t^{\ell-1}e^{-\mu_{k^\ast}t} + \sum_{\substack{k=1\\ k\neq k^\ast}}^{d} \sum_{\ell=1}^{r_k} \frac{\tilde c_{k,\ell,j}}{(\ell-1)!}\, t^{\ell-1}e^{-\mu_k t}, \qquad t>0.
\end{equation}
Here $c_{k,\ell,j}$ and $\tilde c_{k,\ell,j}$ denote real coefficients; for each $j$, their explicit expressions follow from \eqref{coeffc} with the appropriate rate sequence $(a_1,\ldots,a_D)$ and shape sequence $(b_1,\ldots,b_D)$. For brevity, we refer to $c_{k,\ell,j}$ and $\tilde c_{k,\ell,j}$ as the \emph{Erlang coefficients} of $T_j$.

Because the density of $T_j$ takes different forms depending on whether $\lambda_j$ coincides with one of the completion-stage rates, it is convenient to define
\[
\mathcal J^* := \{j\in\{1,\ldots,\tau\} : \lambda_j=\mu_k \text{ for some } k\in\{1,\ldots,d\}\}.
\]
For $j\notin\mathcal J^*$, $T_j$ has $d+1$ distinct rates $(\lambda_j,\mu_1,\ldots,\mu_d)$, and its density is given by \eqref{eqerlang001}. For $j\in\mathcal J^*$, let $u(j)$ denote the unique index such that $\lambda_j=\mu_{u(j)}$; the equal-rate stages are then merged, and the density is given by \eqref{eqerlang002} with $k^\ast=u(j)$.

\begin{proposition} \label{propmain2e}
  Let $(X_t, t\geq 0)$ be a Markov modulated L\'{e}vy process, described in Section \ref{smain1a} and let the trade time $T$, described in Sections \ref{smain1b}-\ref{smain1c}, satisfy \eqref{itm} and \eqref{itmadd}. 
Suppose that Assumption \ref{asshol} holds and there exists $\alpha>0$ such that $r_D(A(-\alpha)) = \lambda_{\min}$.
Then such $\alpha$ is unique, and the following hold.
\begin{enumerate}[label=(\roman*), ref=(\roman*)]
\item\label{propmain2ea} There exist constants \(0<C_1\le C_2<\infty\) such that
\begin{align} \label{propmain2eeq3}
\widetilde M C_1 \le (\log y)^{-\beta} y^\alpha \PP(P_T>y) \le \widetilde M C_2 
\end{align}
for sufficiently large $y$, where $\beta$ is determined by the following case distinction:
 \begin{enumerate}[label=(\alph*), ref=(\alph*)]
    \item \label{propmain2enum1} If $\lambda_{\min} = \lambda_1 < \mu_1$, then $\beta = 0$.
    \item\label{propmain2enum2}  If $\lambda_{\min}  = \mu_1 < \lambda_1$, then $\beta = r_1-1$.
    \item\label{propmain2enum3}  If $\lambda_{\min}  = \lambda_1 =  \mu_1$, then $\beta = r_1$.
\end{enumerate}
    \item  \label{propmain2eb}  Suppose, in addition, the nonlattice condition in Proposition \ref{propmain}\,\ref{propmainb} holds, and $\lambda_{\min}=\lambda_1 < \mu_1$ or $\lambda_{\min}  = \mu_1 < \lambda_1$ with $r_1 = 1$. Then,  as $y\to\infty$,
\[
\lim_{y \to \infty}  y^\alpha \PP(P_T>y) = \widetilde M/\alpha.
\]
\end{enumerate}
In \ref{propmain2ea} and \ref{propmain2eb}, $\widetilde M \coloneqq \lim_{s \to -\alpha}(s+\alpha)^{\beta+1} M_T(s)$, which is positive.
\end{proposition}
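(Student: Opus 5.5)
The plan mirrors the proof of Proposition \ref{propmain}: compute $M_T(s)$ in closed form on $\mathcal I_{\lambda_{\min}}$, identify the pole at $s=-\alpha$ and its order $\beta+1$, and then invoke Corollary \ref{cortau}.

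\textbf{Uniqueness of $\alpha$.} This follows exactly as before. By Lemma \ref{lemmain2}, $r_D(A(s))$ is convex with $r_D(A(0))=0<\lambda_{\min}$, so the sublevel set $\{s: r_D(A(s))\le\lambda_{\min}\}$ is an interval containing $0$. It can therefore contain at most one negative boundary point.

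\textbf{Closed form for $M_T(s)$.} Conditioning on $L=j$ and using Lemma \ref{lemmain}, for $s\in\mathcal I_{\lambda_{\min}}$ we have $M_T(s)=\sum_j q_j\, w_0' E[e^{A(s)T_j}]1_N$. Rather than integrating the density \eqref{eqerlang001}--\eqref{eqerlang002} term by term, it is cleaner to use independence of the stages. The matrices $A(s)$ commute with themselves, and for $\Exp(\kappa)$ one has $E[e^{A(s)T_\kappa}]=\kappa(\kappa I-A(s))^{-1}$ whenever $r_D(A(s))<\kappa$. This gives
\[
E[e^{A(s)T_j}]=\lambda_j(\lambda_jI-A(s))^{-1}\prod_{k=1}^d\mu_k^{r_k}(\mu_kI-A(s))^{-r_k}.
\]
The Erlang density expansion is then only needed to confirm that the scalar computation matches, which is a consistency check.

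\textbf{Pole at $s=-\alpha$.} Write $D_\kappa(s)=\kappa I-A(s)$. By Lemma \ref{metzlerthm}-\ref{metzlerthmc}, $D_\kappa(-\alpha)$ is invertible, with nonnegative inverse, whenever $\kappa>\lambda_{\min}=r_D(A(-\alpha))$. So only the factors with rate equal to $\lambda_{\min}$ are singular.

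For the simple-pole property of $D_{\lambda_{\min}}(s)^{-1}$, I would apply Lemma \ref{lemconvsimple} to $F(z)=A(-z)-\lambda_{\min}I$. This matrix is irreducible Metzler, and $r_D(F(z))=r_D(A(-z))-\lambda_{\min}$ is convex, negative at $0$ and zero at $\alpha$. Lemma \ref{lemsimpole} then gives the residue $c\,xy'$ with $c>0$, where $x,y$ are the positive Perron vectors of $A(-\alpha)$.

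Consequently, in type $j$ the singular factor at $s=-\alpha$ is $(\lambda_1I-A)^{-1}$ for $j=1$ in case (a). In case (b) it is $(\mu_1I-A)^{-r_1}$ for all $j$. In case (c) it is both, that is $(\mu_1I-A)^{-(r_1+1)}$ for $j=1$ and $(\mu_1 I-A)^{-r_1}$ for $j>1$. Since $xy'xy'=(y'x)xy'$ with $y'x>0$, the leading Laurent coefficient of a product of $k$ singular factors is $c^k(y'x)^{k-1}xy'$. This is multiplied by the nonsingular factors evaluated at $-\alpha$, namely $(\kappa I-A(-\alpha))^{-1}$. These are nonnegative and in fact strictly positive, since they are inverses of irreducible nonsingular M-matrices. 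Hence the pole order is $1$, $r_1$, and $r_1+1$ in the three cases, i.e.\ $\beta+1$.

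\textbf{Positivity of $\widetilde M$.} The leading coefficient is $\widetilde M=\sum_{j\text{ attaining max order}}q_j\kappa\text{-constants}\times w_0'(\text{positive matrix})1_N$. In case (b) every type contributes, and all terms are nonnegative with at least one strictly positive. So $\widetilde M>0$, because $x,y$ are strictly positive and $w_0\ne0$ is nonnegative. This positivity is the step I expect to need the most care, in particular showing that the cross products of positive Perron projections with positive resolvents remain positive and do not cancel. Working with resolvent products instead of the signed Erlang coefficients $c_{k,\ell,j}$ avoids any cancellation problem.

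\textbf{Tail bounds.} Corollary \ref{cortau} with $\ell=\beta+1$ gives constants with $\widetilde MC_1\le x^{-\beta}e^{\alpha x}\PP(X_T>x)\le\widetilde MC_2$. Substituting $y=e^x$ yields \ref{propmain2ea}.

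\textbf{Part \ref{propmain2eb}.} Here the pole is simple. Under the nonlattice condition, the argument of Theorem 3.2 of \citet{bst2018} gives $\tau(A(-\alpha+\ii b))<\tau(A(-\alpha))=\lambda_{\min}$ for $b\neq0$. Hence $\lambda_{\min}I-A(-\alpha+\ii b)$ is invertible, and the other factors are invertible because $\kappa>\lambda_{\min}$. Thus $-\alpha$ is the only singularity on the line $\RRe(z)=-\alpha$, and Corollary \ref{cortau} gives the exact limit $\widetilde M/\alpha$.
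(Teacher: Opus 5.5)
Your skeleton matches the paper's: uniqueness by convexity, Lemma~\ref{lemconvsimple} applied to $A(-z)-\lambda_{\min}I$, then Corollary~\ref{cortau}. The difference is how you compute $E[e^{A(s)T_j}]$.

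\textbf{Comparison of routes.} The paper expands the generalized Erlang density \eqref{eqerlang001}--\eqref{eqerlang002} and uses Lemma~\ref{erlem} to turn it into a signed partial-fraction sum $\sum c_{k,\ell,j}(\kappa I-A(s))^{-\ell}$. It must treat types in $\mathcal J^*$ separately, since there $\lambda_j$ merges with some $\mu_k$. It then reads positivity of the top-order coefficient off \eqref{coeffc}: the highest-power coefficient of the smallest rate is $\prod_m a_m^{b_m}\prod_{m\neq k}(a_m-a_k)^{-b_m}>0$. You instead use independence of the stages to factor $E[e^{A(s)T_j}]$ into commuting resolvents $\kappa(\kappa I-A(s))^{-1}$. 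The pole order is then just the multiplicity of $\lambda_{\min}$ in type~1's rate sequence $(\lambda_1,\nu_1,\ldots,\nu_{m-1})$, and the $\mathcal J^*$ bookkeeping disappears.

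Your positivity argument is correct and can be sharpened. Since $y$ is a left eigenvector of $A(-\alpha)$, $y'(\kappa I-A(-\alpha))^{-1}=(\kappa-\lambda_{\min})^{-1}y'$. So the regular factors contribute only the scalar $\prod\kappa/(\kappa-\lambda_{\min})$. Together with $\lambda_{\min}^{\beta+1}$ from the singular factors, this is exactly the paper's top Erlang coefficient, e.g.\ $c_{2,r_1,j}$ in case (b). The two routes therefore give the same $\widetilde M$. There is no cancellation issue in the paper either, since only one top-order coefficient per type enters. The paper's route buys the full partial-fraction form of $M_T$ and meromorphy via Lemma~\ref{erlem}. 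Yours is shorter and makes the order and sign of the pole transparent.

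\textbf{The final Tauberian step fails as written.} The same step fails in the paper's proof. Corollary~\ref{cortau} is stated for $\varphi_X(s)=E[e^{-sX}]$ with a pole at $s=-\alpha$, that is, for $E[e^{uX}]$ blowing up at $u=+\alpha$. The singularity you locate is a pole of $M_T(s)=E[e^{sX_T}]$ at $s=-\alpha$. So $M_T=\varphi_{-X_T}$, and the corollary has to be applied to $-X_T$. What your argument actually delivers is $\widetilde MC_1\le x^{-\beta}e^{\alpha x}\PP(X_T<-x)\le\widetilde MC_2$, which is a bound on $\PP(P_T<y^{-1})$, not on $\PP(P_T>y)$.

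The upper-tail claim cannot hold in the stated form. Take $N=1$, $X_t=\mu t$ with $\mu<0$, and one completion stage. Then $r_D(A(-\alpha))=|\mu|\alpha=\lambda_{\min}$ has a positive root and Assumption~\ref{asshol} holds. Yet $P_T=e^{\mu T}<1$ almost surely, so $\PP(P_T>y)=0$ for $y\ge 1$.

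For the upper tail, the boundary equation should be $r_D(A(\alpha))=\lambda_{\min}$, with $\widetilde M=\lim_{s\uparrow\alpha}(\alpha-s)^{\beta+1}M_T(s)$ and holomorphy near $[0,\alpha]$. Your pole-order and positivity analysis then carries over verbatim with $A(z)$ in place of $A(-z)$.
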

 \begin{proof} 
The uniqueness of $\alpha$ follows exactly as in Proposition \ref{propmain}: because $r_D(A(0))=0<\lambda_{\min}$ and $r_D(A(s))$ is convex, the equation $r_D(A(-\alpha))=\lambda_{\min}$ can have at most one solution with $\alpha>0$.

\smallskip\noindent\textbf{Proof of \ref{propmain2ea}.}
Conditional on $\lambda=\lambda_j$ (i.e., $L=j$), the trade time has law given by $T_j \stackrel{d}{=} T_{\lambda_j}+\sum_{k=1}^d \Erl(\mu_k,r_k)$. Whenever $M_T(s)$ exists, we find that
\begin{align}
M_T(s)
&= E\left[e^{sX_T}\right]
 = \sum_{j=1}^\tau q_j E\left[e^{sX_T}\mid L=j\right] \notag\\
&= w_0'\left(\sum_{j=1}^\tau q_j E\left[e^{A(s)T}\mid L=j\right]\right)1_N \notag\\
&= w_0'\left(\sum_{j\in\mathcal J^*} q_j E\left[e^{A(s)T_j}\right]\right)1_N
+ w_0'\left(\sum_{j\notin\mathcal J^*} q_j E\left[e^{A(s) T_j}\right]\right)1_N.
\label{eqinteg1e1}
\end{align}

Let $\eta := r_D(A(-\alpha)) = \lambda_{\min}>0$. By Assumption \ref{asshol}, $A(-z)$ is holomorphic on an open connected neighborhood of $[0,\alpha]$. Define
\[
F_{\eta}(z) \coloneqq A(-z)-\eta I, \qquad z \in [0,\alpha].
\]
For real $z$, the matrix $A(-z)$ is irreducible Metzler because its off-diagonal sign pattern is inherited from the irreducible generator $G$; therefore $F_{\eta}(z)=A(-z)-\eta I$ is also irreducible Metzler. Moreover,
\[
r_D(F_{\eta}(z)) = r_D(A(-z)) - \eta.
\]
Since $z \mapsto r_D(A(-z))$ is convex, so is $z \mapsto r_D(F_{\eta}(z))$ (see the proof of Proposition \ref{propmain}). Furthermore,
\[
r_D(F_{\eta}(0)) = -\eta < 0, \qquad r_D(F_{\eta}(\alpha)) = 0.
\]
Hence Lemma \ref{lemconvsimple} applies, showing that $F_{\eta}(z)^{-1}$ has a simple pole at $z=\alpha$. Equivalently, $(\eta I-A(s))^{-1}$ has a simple pole at $s=-\alpha$. If $x_{\eta}$ and $y_{\eta}$ denote right and left eigenvectors of $A(-\alpha)$ associated with the eigenvalue $\eta$, then
\begin{align} \label{residform}
R_{\eta} \coloneqq \lim_{s\to -\alpha}(s+\alpha)(\eta I-A(s))^{-1}
= x_{\eta}\left(-\,y_{\eta}' A^{(1)}(-\alpha)x_{\eta}\right)^{-1}y_{\eta}'
= \xi_{\eta} x_{\eta} y_{\eta}',
\end{align}
where $\xi_{\eta} = \left(-\,y_{\eta}' A^{(1)}(-\alpha)x_{\eta}\right)^{-1} > 0$.

\smallskip\noindent\textbf{Case \ref{propmain2enum1}: $\lambda_{\min} = \lambda_1 < \mu_1$.}
In this case, $1 \notin \mathcal J^*$ and $\lambda_1 < \mu_k$ for every $k$. Note that the first term in \eqref{eqinteg1e1} (over $j\in\mathcal J^*$) can be written as a sum of elements of the form $\tilde c_{k,\ell,j}(\mu_k I -A(s))^{-\ell}$ for $k,\ell$ in the appropriate ranges (see Lemma \ref{erlem}). Since $\lambda_1<\mu_k$ for all $k$, each $(\mu_k I -A(s))^{-\ell}$ is holomorphic at $s=-\alpha$, and hence the first term in \eqref{eqinteg1e1} is holomorphic at $s=-\alpha$. We deduce from \eqref{eqerlang001} and Lemma \ref{erlem} that the second term in \eqref{eqinteg1e1} (over $j\notin\mathcal J^*$) can be written as
\begin{align} \label{2term}
&\text{the 2nd term in \eqref{eqinteg1e1}} \notag \\
&= w_0' \sum_{j \notin \mathcal J^*} q_j \left( c_{1,1,j} (\lambda_j I - A(s))^{-1} + \sum_{k=2}^{d+1} \sum_{\ell=1}^{r_{k-1}} c_{k,\ell,j} (\mu_{k-1} I - A(s))^{-\ell} \right) 1_N,
\end{align}
where $c_{k,\ell,j}$ are the Erlang coefficients.

By \eqref{residform} with $\eta=\lambda_1$, the term $(\lambda_1 I-A(s))^{-1}$ has a simple pole at $s=-\alpha$ with residue $R_{\lambda_1}=\xi_{\lambda_1}x_{\lambda_1}y_{\lambda_1}'$. Moreover, since $\lambda_1 < \mu_k$ for every $k$ and the shape parameter associated with $\lambda_1$ is one, the explicit coefficient formula for generalized Erlang densities (Section \ref{sec_erlang}) gives
\[
c_{1,1,1}>0,
\]
so the leading coefficient of the principal part of \eqref{2term} does not vanish. Therefore \eqref{2term} can be written as $w_0'q_1c_{1,1,1}(\lambda_1 I-A(s))^{-1}1_N + H(s)$, where $H(s)$ satisfies $(s+\alpha)H(s)\to 0$ as $s\to -\alpha$. Consequently,
\begin{align}
\widetilde{M} \coloneqq \lim_{s \to -\alpha}(s+\alpha)M_T(s)
= q_1 c_{1,1,1} (w_0' R_{\lambda_1} 1_N)
= \xi_{\lambda_1} q_1 c_{1,1,1}(w_0' x_{\lambda_1}) (y_{\lambda_1}' 1_N) > 0, \label{resideq1122}
\end{align}
so $M_T(s)$ has a simple pole at $s=-\alpha$. Corollary \ref{cortau} with $\ell=1$ then yields constants $0<C_1\le C_2<\infty$ such that, for all sufficiently large $y$,
\[
\widetilde M C_1 \le y^\alpha \PP(P_T>y) \le \widetilde M C_2,
\]
which establishes \eqref{propmain2eeq3} with $\beta=0$ and proves case \ref{propmain2enum1}.

\smallskip\noindent\textbf{Case \ref{propmain2enum2}: $\lambda_{\min} = \mu_1 < \lambda_1$.}
In this case, $\mu_1\notin\{\lambda_1,\ldots,\lambda_\tau\}$, so $\lambda_j\neq\mu_1$ for every $j$. For every type $j$, the completion-stage block with rate $\mu_1$ appears with shape parameter $r_1$. Terms involving $(\lambda_j I-A(s))^{-1}$ are holomorphic at $s=-\alpha$ since $\lambda_j>\mu_1$, and terms involving $(\mu_k I-A(s))^{-\ell}$ with $k\ge 2$ are holomorphic since $\mu_k>\mu_1$. Thus the highest-order singular terms are exactly those involving $(\mu_1 I-A(s))^{-r_1}$. Combining this observation with the densities \eqref{eqerlang001}--\eqref{eqerlang002} and Lemma \ref{erlem},
\begin{align}
M_T(s) = w_0' \left(\sum_{j \notin \mathcal J^*} q_j c_{2,r_1,j} (\mu_1 I - A(s))^{-r_1}\right) 1_N + w_0' \left(\sum_{j \in \mathcal J^*} q_j \tilde c_{1,r_1,j} (\mu_1 I - A(s))^{-r_1}\right) 1_N + H(s),
\end{align}
for some $H(s)$ with $\lim_{s\to -\alpha}(s+\alpha)^{r_1}H(s)= 0$. Here the coefficient indexing follows \eqref{eqerlang001} for $j\notin\mathcal J^*$ and \eqref{eqerlang002} for $j\in\mathcal J^*$.

Since $\mu_1<\lambda_j$ for every $j$ and $\mu_1<\mu_k$ for $k\ge 2$, and the shape parameter associated with $\mu_1$ is $r_1$, the explicit coefficient formula in Section \ref{sec_erlang} gives
\[
c_{2,r_1,j} > 0 \quad \text{for } j\notin\mathcal J^*, \qquad \tilde c_{1,r_1,j} > 0 \quad \text{for } j\in\mathcal J^*,
\]
so the coefficient of the highest-order singular term is strictly positive. Setting $a_j = c_{2,r_1,j}$ for $j\notin\mathcal J^*$ and $a_j = \tilde c_{1,r_1,j}$ for $j\in\mathcal J^*$, we obtain
\begin{align}
\widetilde{M} &\coloneqq \lim_{s \to -\alpha}(s+\alpha)^{r_1}M_T(s) \notag \\
&= w_0'\left(\sum_{j=1}^{\tau} q_j a_j R_{\mu_1}^{r_1}\right) 1_N \notag \\
&=\xi_{\mu_1}^{r_1} \left(\sum_{j=1}^{\tau} q_j a_j\right) (w_0'x_{\mu_1}) (y_{\mu_1}'x_{\mu_1})^{r_1-1}(y_{\mu_1}'1_N) > 0, \label{resideq12}
\end{align}
where positivity follows from $\xi_{\mu_1}>0$, $a_j>0$, $q_j>0$, and the Perron--Frobenius positivity of $x_{\mu_1}, y_{\mu_1}$. Hence $M_T(s)$ has a pole of order exactly $r_1$ at $s=-\alpha$, and Corollary \ref{cortau} with $\ell=r_1$ yields constants $0<C_1\le C_2<\infty$ such that
\[
\widetilde M C_1 \le (\log y)^{-(r_1-1)} y^\alpha \PP(P_T>y) \le \widetilde M C_2 \label{eqtaue4444}
\]
for all sufficiently large $y$, which proves case \ref{propmain2enum2} with $\beta = r_1-1$.

\smallskip\noindent\textbf{Case \ref{propmain2enum3}: $\lambda_{\min} = \lambda_1 = \mu_1$.}
In this case, $1 \in \mathcal J^*$, $u(1) = 1$, and the multiplicity of $\mu_{u(1)}=\mu_1$ in the rate sequence of $T_1$ is $r_1+1$. Since $\mu_1 < \mu_k$ for $k\ge 2$ and $\mu_1 < \lambda_j$ for $j\ge 2$, the resolvents $(\lambda_j I - A(s))^{-\ell}$ and $(\mu_k I - A(s))^{-\ell}$ (for the relevant ranges of $\ell$) are well defined and holomorphic at $s = -\alpha$. By \eqref{residform} with $\eta=\mu_1$, $(\mu_1 I - A(s))^{-1}$ has a simple pole at $s=-\alpha$ with residue $R_{\mu_1}=\xi_{\mu_1}x_{\mu_1}y_{\mu_1}'$. From \eqref{eqerlang002} and Lemma \ref{erlem}, the contribution of $j\in\mathcal J^*$ with $j\neq 1$ to \eqref{eqinteg1e1} involves only $(\mu_1 I - A(s))^{-\ell}$ for $\ell \le r_1$, while the contribution of $j\notin\mathcal J^*$ has a pole of order at most $r_1$ at $s=-\alpha$. The unique term contributing a pole of order $r_1+1$ comes from $j=1$. Therefore
\begin{align} \label{2term2}
M_T(s) = w_0' q_1 \tilde c_{1,r_1+1,1} (\mu_1 I - A(s))^{-r_1-1} 1_N + H(s),
\end{align}
where $H(s)$ satisfies $(s+\alpha)^{r_1+1}H(s) \to 0$ as $s\to -\alpha$.

Since the shape parameter associated with $\mu_1(=\lambda_1)$ is $r_1+1$ and $\mu_1<\mu_k$ for $k\ge 2$, the explicit coefficient formula in Section \ref{sec_erlang} gives $\tilde c_{1,r_1+1,1}>0$. Therefore
\begin{align}
\widetilde{M} &\coloneqq \lim_{s \to -\alpha}(s+\alpha)^{r_1+1}M_T(s) \notag \\
&= w_0'\left(q_1 \tilde c_{1, r_1+1,1} R_{\mu_1}^{r_1+1}\right) 1_N \notag \\
&=\xi_{\mu_1}^{r_1+1} q_1 \tilde c_{1, r_1+1,1} (w_0'x_{\mu_1}) (y_{\mu_1}'x_{\mu_1})^{r_1}(y_{\mu_1}'1_N) > 0, \label{resideq122}
\end{align}
so $M_T(s)$ has a pole of order $r_1+1$ at $s=-\alpha$. Corollary \ref{cortau} with $\ell=r_1+1$ then yields
\[
\widetilde M C_1 \le (\log y)^{-r_1} y^\alpha \PP(P_T>y) \le \widetilde M C_2 \label{eqtaue44444}
\]
for all sufficiently large $y$, which proves case \ref{propmain2enum3} with $\beta = r_1$.

\smallskip\noindent\textbf{Proof of \ref{propmain2eb}.}
The argument is nearly identical to that of Proposition \ref{propmain}-\ref{propmainb} and is therefore omitted.
\end{proof}

The proof of Proposition \ref{propmain2e} also produces an explicit formula for $\widetilde M$ in each case, given respectively by \eqref{resideq1122}, \eqref{resideq12}, and \eqref{resideq122}. The three cases admit a unified description in terms of the order of the leading pole of $M_T(s)$ at $s=-\alpha$:
\begin{itemize}
\item In case \ref{propmain2enum1}, the pole is simple, and $P_T$ has a Pareto-type tail with no logarithmic correction.
\item In case \ref{propmain2enum2}, the pole has order $r_1$. When $r_1=1$, $P_T$ again has a Pareto-type tail with no logarithmic correction; when $r_1\ge 2$, the tail acquires a logarithmic correction of order $r_1-1$.
\item In case \ref{propmain2enum3}, the pole has order $r_1+1\ge 2$, and the tail always carries a logarithmic correction of order $r_1$.
\end{itemize}
Cases \ref{propmain2enum2} with $r_1\ge 2$ and \ref{propmain2enum3} are therefore qualitatively similar: in both, $y^\alpha \PP(P_T>y)$ diverges logarithmically, so $P_T$ has neither a Pareto-type nor a Paretian tail. What singles out case \ref{propmain2enum3} is that the logarithmic correction is forced even when $r_1=1$, because the coincidence $\lambda_1=\mu_1$ raises the multiplicity of the smallest rate by one. In all three cases, the tail exponent $\alpha$ is determined by $r_D(A(-\alpha)) = \lambda_{\min}$, and the slowest component of the trade-time distribution---the smallest type-specific arrival intensity or the smallest completion-stage rate, whichever is smaller---is the only object that enters the exponent. Faster components affect the asymptotic tail only through the scale constant $\widetilde M$.

We close this section with two remarks. The first establishes sharp Paretian tails under additional conditions, and the second develops analogous lower-tail results for both timing models.

\begin{remark}[Price rigidity and exact Paretian tails]
For non-financial goods, it may be realistic to assume that the underlying price process does not evolve continuously, i.e., $\mu_j=\sigma_j^2=0$ in the L\'evy exponent of $(X_t,t\ge 0)$. Under this price rigidity, Propositions \ref{propmain} and \ref{propmain2e} guarantee only Pareto-type tails in general. To obtain an exact Paretian tail, Corollary \ref{cortau} requires not only that the relevant singularity lie on the axis of convergence, but also that the corresponding pole be simple.

For the IIM, let $\alpha>0$ satisfy $r_D(A(-\alpha))=-\log(1-p_1)$. A sufficient condition for $P_T$ to have a Paretian tail is
\begin{align}\label{eqsufficient}
\tau(A(-\alpha+\beta\ii)) < \tau(A(-\alpha)) \qquad\text{for all }\beta\in\mathbb R\setminus\{0\}.
\end{align}
Indeed, by the spectral mapping theorem,
\[
\rho\!\left((1-p_1)e^{A(-\alpha+\beta\ii)}\right) = (1-p_1)e^{\tau(A(-\alpha+\beta\ii))} < (1-p_1)e^{\tau(A(-\alpha))} = 1,
\]
so $I-(1-p_1)e^{A(-\alpha+\beta\ii)}$ is invertible for every $\beta\neq 0$. Hence $s=-\alpha$ is the unique singularity of $M_T(s)$ on its axis of convergence, and Proposition \ref{propmain}-\ref{propmainb} follows.

For the ITM, let $\alpha>0$ satisfy $r_D(A(-\alpha))=\lambda_{\min}$. Condition \eqref{eqsufficient} is again sufficient for uniqueness of the singularity, but this yields an exact Paretian tail only in the simple-pole cases of Proposition \ref{propmain2e}---namely case \ref{propmain2enum1}, and case \ref{propmain2enum2} with $r_1=1$. In case \ref{propmain2enum2} with $r_1\ge 2$, and in case \ref{propmain2enum3}, the leading pole has order greater than one, so the logarithmic correction in Proposition \ref{propmain2e} remains. As in \citet{bst2018}, a nonlattice condition is a convenient sufficient condition for \eqref{eqsufficient}; in particular, a nondegenerate diffusion component in every regime suffices, though it is not necessary.

To illustrate that exact Paretian tails can arise even under price rigidity, consider the scalar case $N=1$ in the ITM, with $X$ a compound Poisson process with normally distributed jumps. Its L\'evy exponent is
\[
\psi(z) = \kappa\left(e^{\mu z+\frac12\sigma^2 z^2}-1\right), \qquad \kappa>0,\ \sigma^2>0.
\]
Let $-\alpha$ satisfy $\psi(-\alpha)=\lambda_{\min}$. Then, for every $\beta\neq 0$,
\begin{align*}
\RRe \psi(-\alpha+\beta\ii)
&= \kappa\left(e^{-\mu\alpha+\frac12\sigma^2(\alpha^2-\beta^2)}\cos(\mu\beta-\sigma^2\alpha\beta)-1\right) \\
&\le \kappa\left(e^{-\mu\alpha+\frac12\sigma^2(\alpha^2-\beta^2)}-1\right)
< \kappa\left(e^{-\mu\alpha+\frac12\sigma^2\alpha^2}-1\right) = \psi(-\alpha).
\end{align*}
Hence \eqref{eqsufficient} holds. Whenever the corresponding leading pole is simple (e.g., in case \ref{propmain2enum1}, or in case \ref{propmain2enum2} with $r_1=1$), $P_T$ has a Paretian tail even though the latent price process has no continuous evolution between jumps. 
\end{remark}

\begin{remark}[Lower tails]\label{remlowertail}
Since $P_T = e^{X_T}>0$, the natural lower-tail analogue concerns small realized prices:
\[
\PP(P_T<y^{-1}) = \PP(X_T<-\log y), \qquad y>1.
\]
Let $\widetilde X_t := -X_t$ and $\widetilde P_T := e^{\widetilde X_T} = P_T^{-1}$. Then $(\widetilde X, J)$ is again a Markov-modulated L\'evy process, with matrix Laplace exponent
\[
\widetilde A(s) = A(-s).
\]
Therefore the upper-tail results in Propositions \ref{propmain} and \ref{propmain2e} apply directly to $\widetilde P_T$.

For the IIM, if there exists $\gamma>0$ such that $r_D(A(\gamma)) = -\log(1-p_1)$, then $\gamma$ is unique and there exist constants $0<C_1\le C_2<\infty$ such that
\[
C_1 \le y^\gamma \PP(P_T<y^{-1}) \le C_2
\]
for all sufficiently large $y$, or equivalently
\[
C_1 \le e^{\gamma x}\PP(X_T<-x) \le C_2
\]
for all sufficiently large $x$. Under the corresponding uniqueness-of-singularity condition (see Corollary \ref{cortau} and Remark \ref{remlowtail}), the exact lower-tail analogue of Proposition \ref{propmain}-\ref{propmainb} follows.

For the ITM, if there exists $\gamma>0$ such that $r_D(A(\gamma)) = \lambda_{\min}$, then $\gamma$ is unique and there exist a nonnegative integer $k$ and constants $0<C_1\le C_2<\infty$ such that
\[
C_1 \le (\log y)^{-k} y^\gamma \PP(P_T<y^{-1}) \le C_2
\]
for all sufficiently large $y$, or equivalently
\[
C_1 \le x^{-k} e^{\gamma x}\PP(X_T<-x) \le C_2
\]
for all sufficiently large $x$. The value of $k$ is determined by the same case distinction as in Proposition \ref{propmain2e}, since the trade-time distribution is unchanged.
\end{remark}

\section{Conclusion}
This paper studies how random trade timing can generate heavy-tailed realized prices from light-tailed latent price dynamics. The realized price is modeled as $P_T=e^{X_T}$, where $X$ is a Markov-modulated L\'evy process and $T$ is the time of the next trade. In the intertrade-incidence model with geometric waiting time, the upper tail of $P_T$ is Pareto-type when the dominant real eigenvalue of the matrix Laplace exponent reaches the threshold determined by the smallest trade probability. A negative-binomial extension of the same model shows how higher-order discrete waiting mechanisms generate logarithmic corrections: when the realized price is observed at the $n$-th successful trading opportunity, the Pareto exponent is unchanged, but the tail acquires a logarithmic correction of order $n-1$. In the intertrade-time model, the same pole-based mechanism extends to generalized Erlang waiting times, with logarithmic corrections arising in boundary cases where the smallest rate appears with higher multiplicity.

The central economic implication is that the slowest component of the trade-time distribution determines the tail exponent. In the incidence model, this component is the least frequent trading type. In the intertrade-time model, it is the smallest rate among arrival and completion stages. Faster components affect the scale of the tail but not its decay exponent. Thus the lower tail of the trading-frequency distribution---or, more generally, the bottleneck in trade timing---is the key object for understanding the upper tail of realized prices. Analogous lower-tail results follow by applying the same arguments to the dual process $-X$, yielding corresponding asymptotics for small realized prices.

The model is deliberately tractable, and several extensions remain open. One natural next step is to estimate or calibrate the model using transaction-level data. Another is to study multi-trade returns rather than a single realized price. A further extension would allow feedback from trading activity to the latent price dynamics. These directions would help connect the theoretical mechanism developed here to empirical asset-pricing applications.

    \bibliographystyle{apalike}

\appendix
\section{Properties of Metzler matrices} \label{appmetzler}
In this appendix, we collect properties of Metzler matrices that are used repeatedly in this paper. Nonnegative matrices are special cases of Metzler matrices, so we first record the relevant result on nonnegative matrices. As described in Section \ref{smain}, $\rho(\mathbb{A})$, $\tau(\mathbb{A})$, and $r_D(\mathbb{A})$ denote the spectral radius, the spectral abscissa, and the dominant real eigenvalue of a square matrix $\mathbb{A}$, respectively.

\begin{lemma}[Perron--Frobenius] \label{pftheorem}
Let $\mathbb{A}$ be an irreducible nonnegative $N\times N$ matrix. Then $\rho(\mathbb{A})$ is strictly positive and a simple eigenvalue of $\mathbb{A}$, and the corresponding left and right eigenvectors $x=(x_1,\ldots,x_N)'$ and $y=(y_1,\ldots,y_N)'$ satisfy $x_j>0$ and $y_j>0$ for all $j=1,\ldots,N$.
\end{lemma}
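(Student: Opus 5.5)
The plan is to prove the lemma along the classical Collatz--Wielandt route, using irreducibility only through the positivity of a power of $I+\mathbb{A}$. Throughout, $\mathbb{A}\neq 0$ is assumed. This is automatic for $N\ge 2$, since irreducibility forces every row to contain a positive off-diagonal entry. For $N=1$ it is the usual convention, under which the lemma is trivial.

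\textbf{Step 1 (positivity of a power).} I will first show that $Q:=(I+\mathbb{A})^{N-1}$ has strictly positive entries. Expanding, $Q=\sum_{k}\binom{N-1}{k}\mathbb{A}^k$, so $(Q)_{ij}>0$ as soon as the directed graph of $\mathbb{A}$ has a path from $i$ to $j$ of length at most $N-1$. Irreducibility guarantees such a path for every pair $(i,j)$. Consequently, for every nonzero nonnegative $x$ we have $Qx>0$ entrywise.

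\textbf{Step 2 (Collatz--Wielandt maximizer).} For nonzero $x\ge 0$, define $r(x)=\min_{i:x_i>0}(\mathbb{A}x)_i/x_i$, so that $\mathbb{A}x\ge r(x)x$, and set $r^\ast=\sup_x r(x)$ over the unit simplex. Because $Q$ commutes with $\mathbb{A}$, $r(Qx)\ge r(x)$. It therefore suffices to maximize over the compact set $Q(\text{simplex})$, which consists of strictly positive vectors; there $r$ is continuous, so the supremum is attained at some $z>0$.

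To see that $z$ is an eigenvector, suppose $\mathbb{A}z-r^\ast z\ge 0$ is nonzero. Then $Q(\mathbb{A}z-r^\ast z)>0$, which means $r(Qz)>r^\ast$, a contradiction. Hence $\mathbb{A}z=r^\ast z$ with $z>0$. Moreover $r^\ast>0$: otherwise $\mathbb{A}z=0$ with $z>0$, forcing $\mathbb{A}=0$. I expect this step to be the main obstacle, since it is where irreducibility enters through Step 1 and where the compactness and continuity details must be handled correctly. The fallback is to first prove Perron's theorem for the positive matrix $Q$ and then transfer the result to $\mathbb{A}$ via the spectral mapping $\lambda\mapsto(1+\lambda)^{N-1}$.

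\textbf{Step 3 ($r^\ast=\rho(\mathbb{A})$).} Let $\mathbb{A}v=\lambda v$. The triangle inequality gives $|\lambda||v|\le \mathbb{A}|v|$ entrywise, so $|\lambda|\le r(|v|)\le r^\ast$. Since $r^\ast$ is itself an eigenvalue, it equals the spectral radius.

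\textbf{Step 4 (simplicity and the left eigenvector).} Applying Steps 1--3 to $\mathbb{A}'$, which is irreducible with the same spectral radius, yields a left eigenvector $y>0$ with $y'\mathbb{A}=\rho y'$.

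For geometric simplicity, let $w$ be any eigenvector for $\rho$. Taking real and imaginary parts, it suffices to treat real $w$. Choose the scalar $c$ so that $z-cw\ge 0$ has a zero entry. The vector $z-cw$ is either zero or a nonnegative eigenvector for $\rho$. In the latter case $z-cw=(1+\rho)^{-(N-1)}Q(z-cw)>0$, contradicting the zero entry. Hence $w$ is a multiple of $z$.

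For algebraic simplicity, suppose there were a Jordan chain $(\mathbb{A}-\rho I)u=z$. Then
\[
y'z=y'(\mathbb{A}-\rho I)u=0,
\]
which contradicts $y'z>0$. This completes the lemma, with $x:=z$ as the right eigenvector and $y$ as the left eigenvector.
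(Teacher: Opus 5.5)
Your proof is correct. It is a genuinely different route only in the sense that the paper gives no proof at all: Lemma~\ref{pftheorem} is quoted as the classical Perron--Frobenius theorem and used as a black box in Lemma~\ref{metzlerthm} and in the residue positivity arguments. What you supply instead is the standard self-contained Collatz--Wielandt proof, and each step checks out:
\begin{itemize}
\item positivity of $Q=(I+\mathbb{A})^{N-1}$ follows from strong connectivity of the graph;
\item the supremum of $r$ reduces to the compact set $Q(\Delta)$ of strictly positive vectors, using $r(Qx)\ge r(x)$ and scale invariance, and $r$ is continuous there;
\item the strict-improvement contradiction forces $\mathbb{A}z=r^\ast z$;
\item the bound $|\lambda|\le r(|v|)$ identifies $r^\ast=\rho(\mathbb{A})$;
\item geometric simplicity follows from the extremal choice of $c$, replacing $w$ by $-w$ if $w$ has no positive entry;
\item algebraic simplicity follows from $y'z>0$.
\end{itemize}
The fallback you mention (Perron for the positive matrix $Q$ plus spectral mapping) is not needed.

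Citing buys the paper brevity. Your route buys transparency about exactly where irreducibility enters. It also exposes a point the paper's statement glosses over. Under the convention that every $1\times 1$ matrix is irreducible, $\mathbb{A}=0$ violates $\rho(\mathbb{A})>0$, so your standing assumption $\mathbb{A}\neq 0$ is the right repair. This is harmless for the paper, which applies the lemma to $\upsilon I+\mathbb{A}$ with $\upsilon$ free to be enlarged. Finally, your algebraic-simplicity step runs on the same pairing $y'x>0$ of positive left and right eigenvectors that the paper invokes in Corollary~\ref{cornegbin} to keep the leading Laurent coefficient nonzero.
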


The Perron--Frobenius theorem yields the following properties of Metzler matrices.
\begin{lemma} \label{metzlerthm}
Let $\mathbb{A}$ be a Metzler matrix. Then
\begin{enumerate}[label=(\roman*), ref=(\roman*), series=metzler]
    \item \label{metzlerthma} $r_D(\mathbb{A})$ exists.
    \item \label{metzlerthmb} $r_D(\mathbb{A}) \le \tau(\mathbb{A}+\mathbb{B})$ for any nonnegative matrix $\mathbb{B}$.
    \item \label{metzlerthmc} $(sI-\mathbb{A})^{-1}$ exists and is nonnegative if and only if $s > r_D(\mathbb{A})$.
\end{enumerate}
If, in addition, $\mathbb{A}$ is irreducible, then
\begin{enumerate}[label=(\roman*), ref=(\roman*), resume=metzler]
    \item \label{metzlerthmd} the left and right eigenvectors associated with $r_D(\mathbb{A})$ may be chosen with strictly positive components.
\end{enumerate}
\end{lemma}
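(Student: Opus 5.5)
The plan is to reduce every Metzler statement to the Perron--Frobenius theorem (Lemma \ref{pftheorem}) by a diagonal shift. Since $\mathbb{A}$ is Metzler, there is $\upsilon\ge 0$ such that $\mathbb{A}_\upsilon:=\upsilon I+\mathbb{A}$ is nonnegative. By the spectral mapping theorem, $\Sigma(\mathbb{A}_\upsilon)=\upsilon+\Sigma(\mathbb{A})$, so $\tau(\mathbb{A}_\upsilon)=\upsilon+\tau(\mathbb{A})$. Eigenvectors are unchanged by the shift, so every claim can be transferred back to $\mathbb{A}$ once it is proved for $\mathbb{A}_\upsilon$.

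For \ref{metzlerthma}, I will use the general (not necessarily irreducible) Perron--Frobenius fact that $\rho(\mathbb{A}_\upsilon)$ is an eigenvalue of a nonnegative matrix. This follows from the irreducible case in Lemma \ref{pftheorem} by perturbing to $\mathbb{A}_\upsilon+\epsilon \mathbf{1}\mathbf{1}'$ and letting $\epsilon\to0$, using continuity of the spectrum. Every eigenvalue $\mu$ satisfies $\RRe\mu\le|\mu|\le\rho$, so $\tau(\mathbb{A}_\upsilon)=\rho(\mathbb{A}_\upsilon)$, and this value is a real eigenvalue. Subtracting $\upsilon$ shows that $\tau(\mathbb{A})$ is a real eigenvalue of $\mathbb{A}$, so $r_D(\mathbb{A})$ exists.

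For \ref{metzlerthmb}, $\mathbb{A}+\mathbb{B}$ is again Metzler, and $0\le \mathbb{A}_\upsilon\le \mathbb{A}_\upsilon+\mathbb{B}$ entrywise. Monotonicity of the spectral radius for nonnegative matrices, $\rho(\mathbb{C})=\lim\|\mathbb{C}^k\|^{1/k}$ with $\mathbb{C}^k$ entrywise monotone, gives $\rho(\mathbb{A}_\upsilon)\le\rho(\mathbb{A}_\upsilon+\mathbb{B})$. Subtracting $\upsilon$ and using part \ref{metzlerthma} for both matrices yields $r_D(\mathbb{A})\le\tau(\mathbb{A}+\mathbb{B})$.

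Part \ref{metzlerthmc} is where I expect the main work. For sufficiency, let $s>r_D(\mathbb{A})$. Then $s+\upsilon>\rho(\mathbb{A}_\upsilon)$, so the Neumann series
\[
(sI-\mathbb{A})^{-1}=\bigl((s+\upsilon)I-\mathbb{A}_\upsilon\bigr)^{-1}=\sum_{k\ge0}(s+\upsilon)^{-k-1}\mathbb{A}_\upsilon^k
\]
converges and is nonnegative. (If necessary, first increase $\upsilon$ so that $s+\upsilon>0$.)

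For necessity, suppose $(sI-\mathbb{A})^{-1}$ exists and is nonnegative. I will take a nonnegative nonzero left eigenvector $y$ of $\mathbb{A}_\upsilon$ for $\rho$, which exists by the limiting Perron--Frobenius argument. Then $y'(sI-\mathbb{A})=(s-r_D)y'$, and therefore
\[
y'=(s-r_D)\,y'(sI-\mathbb{A})^{-1}.
\]
Since $s\neq r_D$ by invertibility, $y'(sI-\mathbb{A})^{-1}=(s-r_D)^{-1}y'$. The left side is nonnegative and nonzero because an invertible matrix kills no nonzero vector. This forces $s-r_D>0$. The delicate point is justifying the nonnegative eigenvector in the reducible case, which the $\epsilon$-perturbation plus compactness of the unit simplex handles.

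Finally, for \ref{metzlerthmd}, if $\mathbb{A}$ is irreducible then so is $\mathbb{A}_\upsilon$, since the off-diagonal pattern is unchanged. Lemma \ref{pftheorem} then gives strictly positive left and right eigenvectors for $\rho(\mathbb{A}_\upsilon)=\upsilon+r_D(\mathbb{A})$, and these are exactly the eigenvectors of $\mathbb{A}$ for $r_D(\mathbb{A})$.
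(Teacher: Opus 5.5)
Your proof is correct. Part (iv) matches the paper's argument exactly: shift to $\upsilon I+\mathbb{A}$, which is nonnegative and irreducible, apply Perron--Frobenius, and translate back.

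For parts (i)--(iii) your route differs. The paper gives no argument of its own there and simply cites Proposition 1 and Lemma 2 of Son (1996). You derive all three from the single identity $r_D(\mathbb{A})=\tau(\mathbb{A})=\rho(\upsilon I+\mathbb{A})-\upsilon$. You obtain this identity by extending Lemma \ref{pftheorem} to reducible nonnegative matrices, using the perturbation $\upsilon I+\mathbb{A}+\epsilon\mathbf{1}\mathbf{1}'$ and a continuity/compactness limit. The remaining parts then follow:
\begin{itemize}
\item (ii) follows from monotonicity of $\rho$ on nonnegative matrices via Gelfand's formula.
\item Sufficiency in (iii) follows from the Neumann series.
\item Necessity in (iii) follows from pairing the resolvent with a nonnegative left Perron vector.
\end{itemize}

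Your approach makes the appendix self-contained. It relies only on the irreducible Perron--Frobenius lemma the paper already states, plus standard matrix analysis, and it shows that one shift identity drives all four parts. The paper's approach is shorter, but it outsources exactly the reducible-case facts (existence of $r_D$ and the resolvent characterization for a general Metzler matrix) that your perturbation argument has to work for.

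One small point: the parenthetical about enlarging $\upsilon$ in the sufficiency step of (iii) is unnecessary. Already $s+\upsilon>\rho(\upsilon I+\mathbb{A})\ge 0$.
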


\begin{proof}
Parts \ref{metzlerthma} and \ref{metzlerthmc} follow from Proposition 1-(i) and Proposition 1-(iv) of \citet{son1996}, respectively. Given the existence of $r_D(\mathbb{A})$ from \ref{metzlerthma}, the inequality in \ref{metzlerthmb} follows from Lemma 2 of \citet{son1996}.

To show \ref{metzlerthmd}, note that since $\mathbb{A}$ is Metzler, we can choose a positive real number $\upsilon$ such that $\upsilon I + \mathbb{A}$ is nonnegative. Because $\mathbb{A}$ is irreducible, so is $\upsilon I + \mathbb{A}$. Lemma \ref{pftheorem} then implies that $\rho(\upsilon I + \mathbb{A})$ is a simple real eigenvalue of $\upsilon I + \mathbb{A}$ with strictly positive left and right eigenvectors. Since
\[
\Sigma(\upsilon I + \mathbb{A}) = \{\upsilon + r_i : r_i \in \Sigma(\mathbb{A})\},
\]
the value $\rho(\upsilon I + \mathbb{A}) - \upsilon$ is a real eigenvalue of $\mathbb{A}$, and a direct check shows that it satisfies all the properties required of the dominant real eigenvalue. The corresponding eigenvectors of $\mathbb{A}$ coincide with those of $\upsilon I + \mathbb{A}$ and are therefore strictly positive.
\end{proof}
  
\section{Tauberian theorem}\label{apptauberian}

This appendix collects the Tauberian results used to characterize the tail behavior in the main text. Nakagawa's
Tauberian theorem is stated for nonnegative random variables, whereas the log-price
 $X_T$ in the body of the paper is real-valued. We therefore first reduce the
upper-tail problem to the positive part of a real-valued random variable, and then state
Nakagawa's theorem in the notation used here.
\begin{lemma}[Reduction to the positive part] \label{lemxplus}
Let $X$ be a real-valued random variable, and define $X^+ \coloneqq \max\{X,0\}$. Let $\varphi_X(s) \coloneqq E[e^{-sX}]$ and $\varphi_+(s) \coloneqq E[e^{-sX^+}]$ wherever finite. Suppose that $\varphi_X$ satisfies the following conditions:
\begin{enumerate}[label=(\roman*), ref=(\roman*)]
    \item \label{lemxpluda} The abscissa of convergence of $\varphi_X$ is $-\alpha$, with $0 < \alpha < \infty$.
    \item \label{lemxpludb} $s = -\alpha$ is a pole of order $\ell$, and $A_\ell$ denotes the coefficient of $(s+\alpha)^{-\ell}$ in the Laurent expansion of $\varphi_X$ at $s=-\alpha$.
    \item \label{lemxpludc} $\varphi_X$ is holomorphic on an open neighborhood of the segment
    \[
    \{s=-\alpha+\ii\tau : -2\alpha\delta<\tau<2\alpha\delta\}\setminus\{-\alpha\}
    \]
    for some $\delta>0$.
\end{enumerate}
Then $\varphi_+$ also satisfies \ref{lemxpluda}--\ref{lemxpludc} with the same constants $\alpha$, $\ell$, $A_\ell$, and $\delta$. If, in addition, $-\alpha$ is the unique singularity of $\varphi_X$ on the axis $\RRe(s)=-\alpha$, then the same holds for $\varphi_+$.
\end{lemma}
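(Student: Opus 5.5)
The plan is to split $\varphi_X$ into the part coming from $\{X>0\}$ and the part coming from $\{X\le 0\}$, and to show that the latter is entire in the relevant region. Then $\varphi_X$ and $\varphi_+$ differ by a function holomorphic on the whole half-plane where the singularities of interest live. Concretely, write
\[
\varphi_+(s) = E[e^{-sX}\mathbbm{1}(X>0)] + \PP(X\le 0), \qquad
\varphi_X(s) = E[e^{-sX}\mathbbm{1}(X>0)] + E[e^{-sX}\mathbbm{1}(X\le 0)].
\]
Hence
\[
\varphi_+(s)-\varphi_X(s) = \PP(X\le 0) - E[e^{-sX}\mathbbm{1}(X\le 0)] =: h(s).
\]

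The key step is to establish that $h$ is holomorphic on the half-plane $\{\RRe(s)<0\}$, and in fact on $\{\RRe(s)\le 0\}$ in the sense of continuity up to the boundary.
\begin{itemize}
\item For $\RRe(s)\le 0$ and $X\le 0$ we have $|e^{-sX}| = e^{-\RRe(s)X}\le 1$. So the integrand is bounded by one, and the integral converges absolutely for every $s$ with $\RRe(s)\le 0$.
\item Holomorphy on the open half-plane $\{\RRe(s)<0\}$ follows by the standard argument: dominated convergence applied to difference quotients, or Morera's theorem combined with Fubini, using the bound $|Xe^{-sX}|\mathbbm{1}(X\le0)\le C_\epsilon$ on $\{\RRe(s)\le-\epsilon\}$.
\item Since $-\alpha<0$, the segment in condition (iii), together with a small neighbourhood of it (shrinking $\delta$'s neighbourhood width if needed so that it stays in $\RRe(s)<0$), lies inside this half-plane.
\end{itemize}
Hence $h$ is holomorphic near the whole axis $\RRe(s)=-\alpha$.

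The rest is transfer.
\begin{itemize}
\item \textbf{Abscissa of convergence.} Both $\varphi_X$ and $\varphi_+$ are finite for real $s$ exactly when $E[e^{-sX}\mathbbm{1}(X>0)]<\infty$, at least for $s\le 0$. For real $s<0$, finiteness of each is therefore governed by the common positive-part integral, and both transforms are automatically finite at $s\ge 0$ (for $\varphi_+$, and for $\varphi_X$ near $0$ on the relevant side). Thus the left abscissa of convergence is the same number $-\alpha$.
\item \textbf{Laurent expansion.} Because $h$ is holomorphic at $-\alpha$, the principal part of the Laurent expansion at $-\alpha$ is unchanged. So the pole order $\ell$ and the leading coefficient $A_\ell$ coincide.
\item \textbf{Condition (iii).} $\varphi_+=\varphi_X+h$ is holomorphic wherever $\varphi_X$ is, on the stated punctured neighbourhood.
\item \textbf{Uniqueness of the singularity.} If $-\alpha$ is the only singularity of $\varphi_X$ on the line $\RRe(s)=-\alpha$, the same holds for $\varphi_+$, since adding $h$ neither creates nor removes singularities there.
\end{itemize}

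The main obstacle is bookkeeping about what "abscissa of convergence" means for a two-sided transform. The paper's $M_T(s)=\varphi_X(-s)$ convention flips signs, and $\varphi_X$ need not exist for large positive $s$. I would handle this by stating explicitly that only the left boundary matters, and that it is determined by the tail integral over $\{X>0\}$, which is common to both functions.
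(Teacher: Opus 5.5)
Your proposal is correct and is essentially the paper's proof: the same decomposition $\varphi_+=\varphi_X+h$ with $h(s)=\PP(X\le 0)-E[e^{-sX}\mathbf{1}(X\le 0)]$, holomorphy of $h$ on $\{\RRe(s)<0\}$, and then transfer of the principal part at $-\alpha$, of holomorphy near the segment, and of uniqueness of the singularity on the axis. The differences are minor. You get holomorphy of $h$ from difference quotients (or Morera) rather than the paper's local power-series expansion. You also identify the abscissa from the fact that, for real $s\le 0$, both transforms are finite exactly when $E[e^{-sX}\mathbf{1}(X>0)]<\infty$, whereas the paper bounds $\varphi_+\le \PP(X\le 0)+\varphi_X$ and then uses that the pole at $-\alpha$ persists.
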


\begin{proof}
Decompose
\[
\varphi_+(s) = E[e^{-sX}\mathbbm{1}(X>0)] + \PP(X\le 0) = \varphi_X(s) + H(s),
\]
where
\[
H(s) \coloneqq \PP(X\le 0) - E[e^{-sX}\mathbbm{1}(X\le 0)].
\]
We first show that $H$ is holomorphic on the half-plane $\{\RRe(s)<0\}$. Let $\nu$ be the finite measure on $[0,\infty)$ defined by
\[
\nu(\mathcal B) \coloneqq \PP(-X\in \mathcal B,\ X\le 0).
\]
Then for $s$ with $\RRe(s)<0$,
\begin{equation}\label{eqmethod}
E[e^{-sX}\mathbbm{1}(X\le 0)] = \int_{[0,\infty)} e^{su}\,\nu(du).
\end{equation}
Fix $s_0$ with $\RRe(s_0)<0$, and choose $\eta>0$ with $\RRe(s_0)+\eta<0$. For $|s-s_0|<\eta$,
\[
e^{su} = e^{s_0u}\sum_{n=0}^\infty \frac{(s-s_0)^n u^n}{n!},
\]
and
\[
\sum_{n=0}^\infty \left|e^{s_0u}\frac{(s-s_0)^n u^n}{n!}\right| \le e^{(\RRe(s_0)+\eta)u} \le 1.
\]
Since $\nu$ is a finite measure, the dominated convergence theorem yields the following
\[
\int_{[0,\infty)} e^{su}\,\nu(du) = \sum_{n=0}^\infty \frac{(s-s_0)^n}{n!} \int_{[0,\infty)} u^n e^{s_0u}\,\nu(du)
\]
on $|s-s_0|<\eta$. Hence $H$ is holomorphic on $\{\RRe(s)<0\}$.

Since $-\alpha<0$, $H$ is holomorphic in a neighborhood of $s=-\alpha$. Therefore $\varphi_+$ and $\varphi_X$ have the same principal part in their Laurent expansions at $s=-\alpha$. In particular, if
\[
\varphi_X(s) = \sum_{m=-\ell}^{\infty} a_m (s+\alpha)^m
\]
near $s=-\alpha$, then
\[
\varphi_+(s) = \sum_{m=-\ell}^{-1} a_m (s+\alpha)^m + \sum_{m=0}^{\infty} \tilde a_m (s+\alpha)^m
\]
for suitable coefficients $\tilde a_m$. Thus $s=-\alpha$ is a pole of $\varphi_+$ of order $\ell$, and the coefficient of $(s+\alpha)^{-\ell}$ is again $A_\ell = a_{-\ell}$.

Because $H$ is holomorphic on the entire half-plane $\{\RRe(s)<0\}$, the holomorphy of $\varphi_X$ on an open neighborhood of
\[
\{s=-\alpha+\ii\tau : -2\alpha\delta<\tau<2\alpha\delta\}\setminus\{-\alpha\}
\]
is equivalent to the same property for $\varphi_+$. Likewise, $-\alpha$ is the unique singularity of $\varphi_X$ on the axis $\RRe(s)=-\alpha$ if and only if it is the unique singularity of $\varphi_+$ there.

Finally, for any real $s>-\alpha$,
\[
\varphi_+(s) = \PP(X\le 0) + E[e^{-sX}\mathbbm{1}(X>0)] \le \PP(X\le 0) + E[e^{-sX}] < \infty,
\]
so the abscissa of convergence of $\varphi_+$ is at most $-\alpha$. Since $s=-\alpha$ remains a singularity of $\varphi_+$, its abscissa of convergence is exactly $-\alpha$.
\end{proof}

\begin{remark}[Lower-tail reduction]  \label{remlowtail}
Lemma \ref{lemxplus} immediately yields the corresponding reduction for lower tails
after replacing \(X\) by \(-X\). Indeed,
\[
\PP(X<-x)=\PP((-X)^+>x), \qquad x>0.
\]
Hence any upper-tail Tauberian statement for a real-valued random variable applies
to the lower tail after passing to its negative.
\end{remark}

We next state a version of the Tauberian theorem of \citet{nakagawa2007} for a nonnegative random variable \(Y\).  The corresponding Laplace transform is denoted by $\varphi_Y(s)$. Suppose that the
abscissa of convergence of $\varphi_Y$ is $-\alpha$ with $0<\alpha<\infty$, and that
$s=-\alpha$ is a pole of order $\ell$.

To state the theorem, let
\[
K=
\begin{cases}
\ell, & \text{if $\ell$ is odd},\\
\ell+1, & \text{if $\ell$ is even}.
\end{cases}
\]
We define for $w \in \mathbb{R}$
\[
R(w)=\frac{w^K}{1-e^{-w}}, \qquad r(w)=w^K.\]
For $-\infty<t<\infty$, we let
\begin{align}
Q_{\omega}(t)
&\coloneqq \sum_{n=0}^\infty e^{-n\omega}
\sum_{k=1}^{K+1} \frac{(-1)^k r^{(k-1)}(\omega)}{(t-n)^k}
+ \sum_{k=1}^{K} \frac{(-1)^{k-1} R^{(k-1)}(\omega)}{t^k}, \\
M_{\omega}(t)
&\coloneqq \frac{1}{K!}\left(\frac{\sin \pi t}{\pi}\right)^{K+1} Q_{\omega}(t), \\
m_{\omega}(t)
&\coloneqq M_{\omega}(t) - \frac{1}{1-e^{-\omega}}
\left(\frac{\sin \pi t}{\pi t}\right)^{K+1}.
\end{align}
For $\delta=2\pi/\omega$ and $t$, set
\[
M_{\delta,\alpha}(t)\coloneqq M_{\omega}\!\left(\frac{\alpha t}{\omega}\right),
\qquad
m_{\delta,\alpha}(t)\coloneqq m_{\omega}\!\left(\frac{\alpha t}{\omega}\right).
\]

\begin{thm}[Nakagawa \citeyearpar{nakagawa2007}, Theorem 5$^\ast$]\label{thmtau}
Let $s=-\alpha$ be a pole of $\varphi_Y(s)$ of order $\ell$, and suppose that
$\varphi_Y$ is holomorphic on an open neighborhood of the segment
\[
\{\,s=-\alpha+\ii\tau:\,-2\alpha\delta<\tau<2\alpha\delta\,\}\setminus\{-\alpha\}
\]
for some $\delta>0$. Let $A_\ell$ be the coefficient of $(s+\alpha)^{-\ell}$ in
the Laurent expansion of $\varphi_Y(s)$ at $s=-\alpha$. Then
\begin{align}
A_\ell \int_{-\infty}^{\infty} m_{\delta,\alpha}(t)\,dt
&\le \liminf_{x\to\infty} x^{-\ell+1} e^{\alpha x} \PP(Y>x) \\
&\le \limsup_{x\to\infty} x^{-\ell+1} e^{\alpha x} \PP(Y>x) \\
&\le A_\ell \int_{-\infty}^{\infty} M_{\delta,\alpha}(t)\,dt.
\end{align}
\end{thm}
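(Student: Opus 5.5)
Since Theorem \ref{thmtau} is taken from \citet{nakagawa2007}, the most economical route is to check that our notation matches his Theorem 5$^\ast$ and cite it. If a self-contained argument is wanted, I would follow the Ikehara--Wiener--Graham--Vaaler smoothing strategy below, adapted to a pole of order $\ell$.

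\textbf{Step 1: transform of the tail.} Set $U(x)=\PP(Y>x)$. For $\RRe(s)>-\alpha$, integration by parts gives
\[
\int_0^\infty e^{-sx}U(x)\,dx=\frac{1-\varphi_Y(s)}{s}.
\]
Since $-\alpha\neq 0$, the factor $1/s$ is holomorphic near $-\alpha$. Hence this transform has a pole of order $\ell$ at $s=-\alpha$, with leading coefficient $A_\ell/\alpha$. It is holomorphic on a neighborhood of $\{-\alpha+\ii\tau:|\tau|<2\alpha\delta\}\setminus\{-\alpha\}$, because $\varphi_Y$ is.

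\textbf{Step 2: subtract the principal part.} The principal part $\sum_{k\le \ell}a_k(s+\alpha)^{-k}$ is the Laplace transform of $e^{-\alpha x}P(x)$, where $P$ is a polynomial of degree $\ell-1$ with leading coefficient $a_\ell/(\ell-1)!$. Put $g(x)=e^{\alpha x}U(x)$. Then $g-P$ has a Laplace transform that extends continuously to the boundary segment $|\tau|<2\alpha\delta$.

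\textbf{Step 3: smoothing by band-limited kernels.} The kernels $M_{\delta,\alpha}$ and $m_{\delta,\alpha}$ are, up to scaling, the Graham--Vaaler type majorant and minorant of a function built from the discretized exponential $\sum_n e^{-n\omega}\mathbbm{1}\{t\ge n\}$. Two properties are needed. First, their Fourier transforms are supported in $[-2\alpha\delta,2\alpha\delta]$; this is where the factor $(\sin\pi t)^{K+1}$ and the choice of odd $K\ge\ell$ enter. Second, they give one-sided bounds for $e^{-\alpha u}\mathbbm{1}\{u\ge0\}$.

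Monotonicity of $U$ gives $U(x+u)\le U(x)$ for $u\ge0$. Consequently $e^{\alpha x}U(x)$ is bounded above and below by convolutions of $g$ with $M_{\delta,\alpha}$ and $m_{\delta,\alpha}$ respectively, up to normalization.

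\textbf{Step 4: evaluate the convolutions.} By Parseval, each convolution equals an integral of the transform against the kernel's Fourier transform over $|\tau|<2\alpha\delta$ only. The contribution of $g-P$ is $o(x^{\ell-1})$ by a Riemann--Lebesgue argument. The contribution of $P$ is $\frac{A_\ell}{(\ell-1)!}\,x^{\ell-1}\int M_{\delta,\alpha}$ (respectively $\int m_{\delta,\alpha}$) plus lower-order terms. Dividing by $x^{\ell-1}$ and taking $\limsup$ and $\liminf$ yields the stated sandwich, after absorbing the normalizing constants into Nakagawa's definitions of $M_\omega$ and $m_\omega$.

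\textbf{Main obstacle.} The hard step is Step 3 for higher-order poles. One must verify that the explicit $Q_\omega$ indeed produces a majorant and minorant of exponential type $2\pi$, with integrable tails of order $t^{-2}$, and that the derivative terms $r^{(k-1)}(\omega)$ and $R^{(k-1)}(\omega)$ cancel the singularities at integers. When $\ell\ge2$, it must also be checked that polynomial growth does not spoil the Riemann--Lebesgue step. For this I would first reduce to the case $\ell=1$ by repeated differentiation of the transform, and then redo the extremal-function computation directly from Nakagawa's construction.
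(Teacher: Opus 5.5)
Your primary route is the paper's own: the paper does not prove Theorem~\ref{thmtau}. It quotes the result from Nakagawa's Theorem~5$^\ast$ and uses it only in Corollary~\ref{cortau}, where Lemma~\ref{lemxplus} transfers it to real-valued variables. So checking that the notation matches and citing is all the paper does.

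Your optional self-contained sketch has the right general shape, since Nakagawa's argument is a finite-form Ikehara theorem in the Graham--Vaaler style. As written, however, it would not prove the stated inequality.

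\textbf{The reduction to $\ell=1$.} This is the decisive problem, and it runs the wrong way. Differentiation raises the order of the pole: $\varphi_Y'(s)=-E[Ye^{-sY}]$ has a pole of order $\ell+1$ at $-\alpha$. Lowering the order requires weighting $dF(y)$ by $y^{-1}$, i.e.\ integrating the transform. That turns the lower-order Laurent terms $a_k(s+\alpha)^{-k}$, $k<\ell$, into $\log(s+\alpha)$-type singularities. These are not poles, so the $\ell=1$ case of the theorem does not apply directly. Even after that is repaired, the argument runs through the $K=1$ kernels. It therefore yields their integrals as constants, not $\int m_{\delta,\alpha}$ and $\int M_{\delta,\alpha}$ with $K\in\{\ell,\ell+1\}$ as asserted. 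That would still be enough for Corollary~\ref{cortau}, which only claims some finite constants, but not for Theorem~\ref{thmtau}.

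\textbf{The kernels.} Your description is off for $\ell\ge2$. With the displayed definitions, the factor $(\sin\pi t)^{K+1}$ gives $M_\omega$ exponential type $(K+1)\pi$, not $2\pi$. After the rescaling $t\mapsto\alpha\delta t/(2\pi)$, the spectrum of $M_{\delta,\alpha}$ is $[-(K+1)\alpha\delta/2,(K+1)\alpha\delta/2]$. This lies in $[-2\alpha\delta,2\alpha\delta]$ only when $K\le 3$, so it must be reconciled with Nakagawa's normalization rather than asserted. The two conditions on $K$ also play different roles from the ones you assign. Oddness of $K$ makes $(\sin\pi t)^{K+1}\ge 0$. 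The $t^{-1},\dots,t^{-K}$ terms of $Q_\omega$ cancel, so $M_\omega(t)=O(|t|^{-K-1})$. The condition $K\ge\ell$ then makes the moments of order up to $\ell-1$ finite, which is what you need when the kernel meets the polynomial principal part. Neither condition is the source of the band limit.

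\textbf{The one-sided bounds.} These do not come from inserting $U(x+u)\le U(x)$ into convolutions of $g$ with $M_{\delta,\alpha}$ and $m_{\delta,\alpha}$. In the Graham--Vaaler argument one writes $g(x)=\int_{(x,\infty)}e^{-\alpha(y-x)}e^{\alpha y}\,dF(y)$. One then replaces $e^{-\alpha u}\mathbbm{1}(u>0)$ by its band-limited majorant or minorant, integrated against the positive measure $e^{\alpha y}dF(y)$. Expressed in terms of $g$, this is a convolution with $\alpha M_{\delta,\alpha}+M_{\delta,\alpha}'$, not with $M_{\delta,\alpha}$. Hence the boundary function that enters is $\varphi_Y(-\alpha+\ii\tau)$ itself, and your Step~1 is unnecessary.
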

From Theorem \ref{thmtau}, we obtain the following extension to real-valued
random variables:
\begin{cor}[Extension to real-valued random variables]\label{cortau}
Let $X$ be a real-valued random variable whose Laplace transform $\varphi_X$ satisfy the conditions \ref{lemxpluda}--\ref{lemxpludc} of Lemma \ref{lemxplus}. Then there exist finite constants $C_1$ and $C_2$, depending only on
$\alpha$, $\ell$, and $\delta$, such that
\begin{align}
A_\ell C_1
&\le \liminf_{x\to\infty} x^{-\ell+1} e^{\alpha x} \PP(X>x) \notag\\
&\le \limsup_{x\to\infty} x^{-\ell+1} e^{\alpha x} \PP(X>x)
\le A_\ell C_2. \label{highereq}
\end{align}
If, in addition, $\ell=1$ and $-\alpha$ is the unique singularity of $\varphi_X$ on the
axis $\RRe (s)=-\alpha$, then
\begin{align}
\lim_{x\to\infty} e^{\alpha x}\PP(X>x) = A_1/\alpha. \label{simpoleeq}
\end{align}
\end{cor}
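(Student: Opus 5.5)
The plan is to derive Corollary \ref{cortau} from Lemma \ref{lemxplus} and Theorem \ref{thmtau}, applied to the nonnegative random variable $Y=X^+$, which has the same upper tail as $X$. For every $x>0$ we have $\PP(X>x)=\PP(X^+>x)$, so it is enough to obtain the two-sided bounds for $X^+$. By Lemma \ref{lemxplus}, conditions \ref{lemxpluda}--\ref{lemxpludc} transfer from $\varphi_X$ to $\varphi_+=\varphi_{X^+}$ with the same $\alpha$, $\ell$, $\delta$ and leading Laurent coefficient $A_\ell$. Consequently $X^+$ is a nonnegative random variable meeting all the hypotheses of Theorem \ref{thmtau}. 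That theorem then yields \eqref{highereq} with
\[
C_1=\int_{-\infty}^{\infty} m_{\delta,\alpha}(t)\,dt,\qquad C_2=\int_{-\infty}^{\infty} M_{\delta,\alpha}(t)\,dt .
\]
These constants depend only on $\alpha$, $\ell$ (through $K$) and $\delta$ (through $\omega=2\pi/\delta$). Finiteness should follow from the kernel definitions. The factor $(\sin\pi t)^{K+1}$ cancels the poles of $Q_\omega$ at the integers. Moreover $M_\omega(t)=O(|t|^{-2})$ as $|t|\to\infty$, because $K+1\ge 2$ and each term of $Q_\omega$ decays at least like $t^{-1}$ uniformly in the geometric sum. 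Since $\int m\le\int M$, both integrals are finite and ordered.

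For the sharp statement \eqref{simpoleeq}, assume $\ell=1$ and that $-\alpha$ is the only singularity of $\varphi_X$ on the axis $\RRe(s)=-\alpha$. By the final clause of Lemma \ref{lemxplus}, the same holds for $\varphi_+$. Then $\varphi_+$ is holomorphic on a neighbourhood of $\{-\alpha+\ii\tau:|\tau|<2\alpha\delta\}\setminus\{-\alpha\}$ for \emph{every} $\delta>0$, so the bounds above hold for all $\delta$ with $A_1$ unchanged. I would then let $\delta\to\infty$, which is the same as $\omega\to0$, and show
\[
\int_{-\infty}^{\infty} m_{\delta,\alpha}\,dt\to 1/\alpha \quad\text{and}\quad \int_{-\infty}^{\infty} M_{\delta,\alpha}\,dt\to 1/\alpha .
\]
Squeezing $\liminf$ and $\limsup$ between these two limits gives \eqref{simpoleeq}. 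The change of variables $t\mapsto \alpha t/\omega$ gives $\int M_{\delta,\alpha}=(\omega/\alpha)\int M_\omega$, and likewise for $m$. So the requirement is $\omega\int M_\omega\to1$ and $\omega\int m_\omega\to1$ as $\omega\to0$.

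The difference term is $\frac{1}{1-e^{-\omega}}\int(\sin\pi t/\pi t)^{2}dt=\frac{1}{1-e^{-\omega}}$, and $\omega/(1-e^{-\omega})\to1$. It therefore suffices to show that the Nakagawa kernel satisfies $\omega\int M_\omega\to1$. This is exactly Nakagawa's own limiting statement, that Theorem 5$^\ast$ implies the Ikehara-type Theorem when $\ell=1$, and I would cite it from \citet{nakagawa2007} rather than recompute it.

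The main obstacle is this last limit. The integral of $M_\omega$ involves the infinite sum over $n$ in $Q_\omega$, and an honest computation needs the Fourier representation of the kernels. If a citation is not acceptable, I would instead fall back on the Ikehara–Wiener Tauberian theorem for the measure $e^{\alpha x}\PP(X^+\in dx)$. Its Laplace transform has, by hypothesis, only the simple pole at the boundary and extends continuously to the rest of the axis. That theorem gives the renewal-type asymptotic $e^{\alpha x}\PP(X^+>x)\to A_1/\alpha$ directly.
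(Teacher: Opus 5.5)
Your proof of \eqref{highereq} is the paper's proof: pass to $Y=X^+$, transfer (i)--(iii) through Lemma \ref{lemxplus}, and apply Theorem \ref{thmtau} with $C_1=\int m_{\delta,\alpha}$ and $C_2=\int M_{\delta,\alpha}$. The gap is in your primary route to \eqref{simpoleeq}, the $\delta\to\infty$ squeeze. For $\ell=1$ one has $K=1$, and with the kernels as defined in Appendix \ref{apptauberian}, for every $\omega>0$,
\[
C_2-C_1=\frac{\omega}{\alpha}\int_{-\infty}^{\infty}\bigl(M_\omega(u)-m_\omega(u)\bigr)\,du=\frac{\omega}{\alpha(1-e^{-\omega})}\int_{-\infty}^{\infty}\Bigl(\frac{\sin\pi u}{\pi u}\Bigr)^{2}du=\frac{\omega}{\alpha(1-e^{-\omega})}\;\ge\;\frac{1}{\alpha}.
\]
You computed exactly this difference, but it works against you. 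The upper and lower constants never come closer than $1/\alpha$, so they cannot both tend to $1/\alpha$. If $\omega\int M_\omega\to1$, as you want, then $\omega\int m_\omega\to0$, and the squeeze gives only $0\le\liminf_{x\to\infty} e^{\alpha x}\PP(X>x)\le\limsup_{x\to\infty} e^{\alpha x}\PP(X>x)\le A_1/\alpha$. So ``it therefore suffices to show $\omega\int M_\omega\to1$'' is a non sequitur. Sending $\delta\to\infty$ recovers an exact limit only if $\omega\bigl(\int M_\omega-\int m_\omega\bigr)\to0$; with this $m_\omega$ that quantity tends to $1$.

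Separately, your integrability argument is off. The factor $(\sin\pi t/\pi)^{K+1}$ in $M_\omega$ is bounded, not decaying, so $O(|t|^{-1})$ decay of $Q_\omega$ is not enough. Integrability actually comes from the $R$-terms in $Q_\omega$ cancelling the $t^{-1}$ part of the geometric sum. For $K=1$, the piece $-\omega\sum_n e^{-n\omega}/(t-n)\approx-\omega/\bigl((1-e^{-\omega})t\bigr)$ cancels against $R(\omega)/t$.

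The exact limit therefore needs a separate Tauberian theorem, and that is what the paper uses. It applies an Ikehara--Wiener-type result (Korevaar's Theorems 5.2 and 5.4, or Theorem 2.1 of the Beare--Toda working paper) to $Y$. The last clause of Lemma \ref{lemxplus} transfers the uniqueness of the singularity on the axis.

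Your fallback is this route, but as phrased it is too loose. Take $\Gamma(dx)=e^{\alpha x}\PP(X^+\in dx)$, whose transform $\varphi_+(s-\alpha)$ has its simple pole at $s=0$. The classical Ikehara--Wiener theorem controls cumulative or smoothed masses of $\Gamma$. It does not by itself give the pointwise limit of
\[
e^{\alpha x}\PP(X^+>x)=\int_{(x,\infty)}e^{-\alpha(y-x)}\,\Gamma(dy).
\]
For that you need one of two versions. One is the local form $\Gamma((x,x+h])\to A_1h$ together with a uniform bound on $\Gamma((x,x+1])$, followed by dominated convergence. The other is a version that exploits the monotonicity of $\PP(X^+>x)$. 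These are what the cited results supply.
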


\begin{proof}
Let $Y:=X^+$. By Lemma \ref{lemxplus}, the Laplace transform of $Y$ has the same
abscissa of convergence, the same pole order, and the same leading Laurent coefficient
as $\varphi_X$. Moreover,
\[
\PP(Y>x)=\PP(X>x), \qquad x>0.
\]
Applying Theorem \ref{thmtau} to the nonnegative random variable $Y$, we obtain
\eqref{highereq} with
\[
C_1:=\int_{-\infty}^{\infty} m_{\delta,\alpha}(t)\,dt,
\qquad
C_2:=\int_{-\infty}^{\infty} M_{\delta,\alpha}(t)\,dt.
\]
If $\ell=1$ and $-\alpha$ is the unique singularity of $\varphi_X$ on the axis
$\RRe (s)=-\alpha$, then the same is true for the Laplace transform of $Y$ by
Lemma \ref{lemxplus}. Hence \eqref{simpoleeq} follows from
\citet[Theorem 5.2 and 5.4]{korevaar2004} or
\citet[Theorem 2.1 of the working paper version]{bearetoda2017}, applied to $Y$.
\end{proof}

   
\section{Simple poles of holomorphic matrix-valued functions}\label{appsimpole}

This appendix collects two lemmas characterizing simple poles of the inverse of a holomorphic matrix-valued function.

\begin{lemma}[Simple-pole criterion] \label{lemsimpole}
Let $\mathbb{A}(z)$ be an $N\times N$ complex matrix-valued holomorphic function on an open connected set $\Omega \subset \mathbb C$. Suppose that $\mathbb{A}(z)$ is invertible for some $z \in \Omega$, and that $\mathbb{A}(z_0)$ has rank $r<N$ for some $z_0 \in \Omega$. Then the following conditions are equivalent:
\begin{enumerate}[label=(\roman*), ref=(\roman*)]
    \item \label{lemsimpolea} $\mathbb{A}(z)^{-1}$ has a simple pole at $z=z_0$.
    \item \label{lemsimpoleb} There exist $N \times (N-r)$ matrices $x$ and $y$ of full column rank such that $\mathbb{A}(z_0)x = 0$, $y^\top \mathbb{A}(z_0) = 0$, and $y^\top \mathbb{A}^{(1)}(z_0)x$ is invertible.
\end{enumerate}
Under either condition, the residue of $\mathbb{A}(z)^{-1}$ at $z=z_0$ is
\[
x \left(y^\top \mathbb{A}^{(1)}(z_0) x\right)^{-1} y^\top.
\]
\end{lemma}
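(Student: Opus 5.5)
We reduce the claim to linear algebra on the Laurent expansion of $\mathbb{A}(z)^{-1}$ around $z_0$. Since $\mathbb{A}$ is holomorphic on the connected set $\Omega$ and invertible at some point, $\det\mathbb{A}(z)$ is a holomorphic function that is not identically zero. Its zeros are therefore isolated, and $\mathbb{A}(z)^{-1}=\operatorname{adj}\mathbb{A}(z)/\det\mathbb{A}(z)$ is meromorphic with at worst a pole at $z_0$. Write
\[
\mathbb{A}(z)=\mathbb{A}_0+(z-z_0)\mathbb{A}_1+(z-z_0)^2\widetilde{\mathbb{A}}(z),\qquad \mathbb{A}_0=\mathbb{A}(z_0),\quad \mathbb{A}_1=\mathbb{A}^{(1)}(z_0).
\]
We also write $\mathbb{A}(z)^{-1}=\sum_{k\ge -p}(z-z_0)^k\mathbb{N}_k$ with $\mathbb{N}_{-p}\neq 0$. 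Here $p\ge 1$, because $\mathbb{A}_0$ is singular and so the inverse cannot stay bounded.

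For (i)$\Rightarrow$(ii), assume $p=1$. Comparing coefficients in $\mathbb{A}(z)\mathbb{A}(z)^{-1}=I=\mathbb{A}(z)^{-1}\mathbb{A}(z)$ gives, at order $-1$, $\mathbb{A}_0\mathbb{N}_{-1}=0=\mathbb{N}_{-1}\mathbb{A}_0$. At order $0$ it gives $\mathbb{A}_0\mathbb{N}_0+\mathbb{A}_1\mathbb{N}_{-1}=I$, together with the symmetric identity. Choose $x$ and $y$ whose columns span $\ker\mathbb{A}_0$ and $\ker\mathbb{A}_0^\top$; each has $N-r$ columns. Multiplying the order-$0$ identity on the left by $y^\top$ gives $y^\top\mathbb{A}_1\mathbb{N}_{-1}=y^\top$. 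Since the columns of $\mathbb{N}_{-1}$ lie in $\ker\mathbb{A}_0$, we can write $\mathbb{N}_{-1}=xC$. The symmetric identity shows that the rows of $\mathbb{N}_{-1}$ lie in the left kernel, so $C=Dy^\top$ and $\mathbb{N}_{-1}=xDy^\top$. Substituting gives $(y^\top\mathbb{A}_1x)Dy^\top=y^\top$. Because $y^\top$ has full row rank, $(y^\top\mathbb{A}_1x)D=I_{N-r}$. Hence $y^\top\mathbb{A}_1x$ is invertible and $D=(y^\top\mathbb{A}_1x)^{-1}$, which is exactly the claimed residue.

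For (ii)$\Rightarrow$(i), we construct the local inverse with at most a simple pole by a Schur-complement argument. Complete $x$ to a basis $[x_c\ x]$ and $y$ to $[y_c\ y]$. In these coordinates, $\widehat{\mathbb{A}}(z)=[y_c\ y]^\top\mathbb{A}(z)[x_c\ x]$ has the block form
\[
\begin{pmatrix} E+O(z-z_0) & (z-z_0)F_{12}(z)\\ (z-z_0)F_{21}(z) & (z-z_0)\bigl(y^\top\mathbb{A}_1x+O(z-z_0)\bigr)\end{pmatrix}.
\]
Here $E=y_c^\top\mathbb{A}_0x_c$ is invertible, because $\mathbb{A}_0$ restricted to a complement of its kernel maps bijectively onto its range. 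The Schur complement is
\[
S(z)=(z-z_0)\bigl(y^\top\mathbb{A}_1x+O(z-z_0)\bigr)-(z-z_0)^2F_{21}E^{-1}F_{12}+\dots=(z-z_0)\bigl(y^\top\mathbb{A}_1x+O(z-z_0)\bigr).
\]
Therefore $S(z)^{-1}=(z-z_0)^{-1}\bigl[(y^\top\mathbb{A}_1x)^{-1}+O(z-z_0)\bigr]$, which has a simple pole. The block-inverse formula then expresses every block of $\widehat{\mathbb{A}}^{-1}$ in terms of $S^{-1}$ multiplied by factors of order $(z-z_0)$ or $O(1)$, so every block has at most a simple pole. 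It is a genuine pole, since $p\ge1$. The residue formula then follows from the first part.

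The main point needing care is the block bookkeeping in the second direction, specifically checking that the off-diagonal blocks vanish to first order. This holds because $\mathbb{A}_0x=0$ and $y^\top\mathbb{A}_0=0$, so their contributions to the inverse remain $O(1)$. If this becomes messy, a fallback is to use the determinant directly: $\det\widehat{\mathbb{A}}(z)=\det E\,\det S(z)$ vanishes to order exactly $N-r$, and each entry of the adjugate vanishes to order at least $N-r-1$, which again bounds the pole order by one.
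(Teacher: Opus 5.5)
Your proof is correct. It differs from the paper mainly in that the paper does not prove the lemma itself: its proof is a one-line deferral to Theorem 1 of \citet{bst2025corr}. Your version is self-contained and handles the two directions separately.

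For (i)$\Rightarrow$(ii), you match the order $-1$ and order $0$ Laurent coefficients of $\mathbb{A}(z)\mathbb{A}(z)^{-1}=I=\mathbb{A}(z)^{-1}\mathbb{A}(z)$. This gives at once the factorization $\mathbb{N}_{-1}=xDy^\top$, the invertibility of $y^\top\mathbb{A}^{(1)}(z_0)x$, and the residue.

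For (ii)$\Rightarrow$(i), your change of basis places the singular directions in a block equal to $(z-z_0)$ times an invertible matrix. The Schur complement then shows that every block of the inverse has at most a simple pole. Your adjugate fallback is also sound: the last $N-r$ rows of $\widehat{\mathbb{A}}(z)$ are $O(z-z_0)$, so every $(N-1)\times(N-1)$ minor is $O((z-z_0)^{N-r-1})$, while $\det\widehat{\mathbb{A}}(z)$ vanishes to order exactly $N-r$.

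The citation buys the paper brevity. Your argument buys transparency. In particular, it shows that the residue formula does not depend on which full-column-rank $x,y$ are used in (ii). Any two such choices are bases of the same kernels, so they differ by invertible right factors, and these cancel in $x(y^\top\mathbb{A}^{(1)}(z_0)x)^{-1}y^\top$. Your coefficient matching works for arbitrary kernel bases in any case.

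One step needs a line more. Invertibility of $E=y_c^\top\mathbb{A}_0x_c$ requires two things:
\begin{itemize}
\item $\mathbb{A}_0$ maps $\operatorname{span}(x_c)$ bijectively onto $\operatorname{range}\mathbb{A}_0$, which you note;
\item $y_c^\top$ is injective on that range, which you do not. It holds because $y^\top$ annihilates the range and $[y_c\ y]$ is invertible.
\end{itemize}
A simpler route: $\widehat{\mathbb{A}}(z_0)$ is block diagonal with blocks $E$ and $0$, and it is equivalent to $\mathbb{A}_0$. Hence it has rank $r$, so $E$ is invertible.
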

\begin{proof}
See Theorem 1 of \citet{bst2025corr}.
\end{proof}

\begin{lemma}[Convex spectral-abscissa criterion] \label{lemconvsimple}
Let $\mathbb{A}(z)$ be an $N\times N$ complex matrix-valued holomorphic function on an open connected set $\Omega \subset \mathbb C$ containing the interval $[0,\alpha]$. Suppose that
\begin{enumerate}[label=(\roman*), ref=(\roman*)]
    \item \label{lemconvsimplea} $\mathbb{A}(z)$ is irreducible Metzler for every real $z$ in some open interval containing $[0,\alpha]$;
    \item \label{lemconvsimpleb} $r_D(\mathbb{A}(z))$ is convex in real $z \in [0,\alpha]$;
    \item \label{lemconvsimplec} $r_D(\mathbb{A}(0))<0$ and $r_D(\mathbb{A}(\alpha))=0$.
\end{enumerate}
Then $\mathbb{A}(\alpha)$ has rank $N-1$, $\mathbb{A}(z)^{-1}$ has a simple pole at $z=\alpha$, and if $x$ and $y$ are right and left eigenvectors of $\mathbb{A}(\alpha)$ associated with the eigenvalue $0$, then the residue of $\mathbb{A}(z)^{-1}$ at $z=\alpha$ is
\[
x \left(y^\top \mathbb{A}^{(1)}(\alpha) x\right)^{-1} y^\top.
\]
\end{lemma}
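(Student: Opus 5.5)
We prove Lemma \ref{lemconvsimple} in two stages. First we show that $0$ is a simple eigenvalue of $\mathbb{A}(\alpha)$, so that $\mathbb{A}(\alpha)$ has rank $N-1$. Then we verify the derivative condition of Lemma \ref{lemsimpole} with $r=N-1$, which gives the simple pole and the residue formula at once.

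\emph{Rank.} By (iii), $r_D(\mathbb{A}(\alpha))=0$. By (i), $\mathbb{A}(\alpha)$ is irreducible Metzler. Choose $\upsilon>0$ with $\upsilon I+\mathbb{A}(\alpha)$ nonnegative. By Lemma \ref{pftheorem}, $\rho(\upsilon I+\mathbb{A}(\alpha))=\upsilon$ is then a simple eigenvalue, with strictly positive left and right eigenvectors $y$ and $x$ (see also Lemma \ref{metzlerthm}\ref{metzlerthmd}). Simplicity of the eigenvalue $0$ of $\mathbb{A}(\alpha)$ gives $\operatorname{rank}\mathbb{A}(\alpha)=N-1$, and $x,y$ are the $N\times 1$ matrices required in Lemma \ref{lemsimpole}\ref{lemsimpoleb}. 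Lemma \ref{lemsimpole} also needs $\mathbb{A}(z)$ to be invertible somewhere. This holds at $z=0$, since $r_D(\mathbb{A}(0))<0$ means $0$ is not an eigenvalue.

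\emph{Derivative condition.} It remains to show $y^\top\mathbb{A}^{(1)}(\alpha)x\neq 0$. Because the eigenvalue $0$ is simple, analytic perturbation theory says that, for real $z$ near $\alpha$, the Perron root $\lambda(z):=r_D(\mathbb{A}(z))$ is the analytic continuation of that simple eigenvalue. It is therefore differentiable at $\alpha$, with
\[
\lambda'(\alpha)=\frac{y^\top\mathbb{A}^{(1)}(\alpha)x}{y^\top x},
\]
and $y^\top x>0$ by positivity. The real part of the eigenvalue stays the dominant real one nearby, because simple eigenvalues vary continuously and the Perron root stays real and simple under irreducibility. So it suffices to show $\lambda'(\alpha)>0$. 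This follows from convexity (ii): for a convex function on $[0,\alpha]$, the left derivative at $\alpha$ is at least the chord slope,
\[
\lambda'(\alpha)\ \ge\ \frac{\lambda(\alpha)-\lambda(0)}{\alpha}=\frac{-r_D(\mathbb{A}(0))}{\alpha}>0 .
\]
Since $\lambda$ is differentiable at $\alpha$, the left derivative equals $\lambda'(\alpha)$. Hence $y^\top\mathbb{A}^{(1)}(\alpha)x>0$, and in particular it is invertible as a $1\times1$ matrix.

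\emph{Conclusion and main obstacle.} Lemma \ref{lemsimpole} now applies on $\Omega$ with $z_0=\alpha$ and $r=N-1$. It yields that $\mathbb{A}(z)^{-1}$ has a simple pole at $\alpha$ with residue $x\,(y^\top\mathbb{A}^{(1)}(\alpha)x)^{-1}y^\top$. This residue does not depend on the normalisation of $x$ and $y$, since it is homogeneous of degree zero in each. The delicate step is identifying $r_D(\mathbb{A}(z))$ near $\alpha$ with the analytic branch of the simple eigenvalue, so that the derivative formula holds. I would handle it by applying the implicit function theorem to $\det(\lambda I-\mathbb{A}(z))$: the derivative $\partial_\lambda\det\neq0$ at $(0,\alpha)$ by simplicity, which gives a real-analytic real branch. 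Continuity of the Perron root, together with the spectral gap from the other eigenvalues, shows this branch coincides with $r_D$. Condition (i), requiring irreducibility on an open interval around $[0,\alpha]$, is exactly what makes this local argument valid at the endpoint $\alpha$.
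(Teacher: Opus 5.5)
Your proof is correct. Its decisive step is the same as the paper's: for $g(z)=r_D(\mathbb{A}(z))$, convexity and the endpoint conditions give $\partial_-g(\alpha)\ge (g(\alpha)-g(0))/\alpha>0$, and the conclusion is then read off from the simple-pole criterion of Lemma~\ref{lemsimpole}.

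The difference lies in how the positive slope is turned into the hypotheses of that criterion. The paper does this in one line by citing Lemma~1 of \citet{bst2025corr}, which delivers both $\operatorname{rank}\mathbb{A}(\alpha)=N-1$ and the simple pole. You prove the bridge yourself:
\begin{itemize}
\item the rank follows from the algebraic simplicity of the Perron root of $\upsilon I+\mathbb{A}(\alpha)$;
\item the nondegeneracy follows from the first-order perturbation formula $g'(\alpha)=y^\top\mathbb{A}^{(1)}(\alpha)x/(y^\top x)$, after you identify the implicit-function branch through $(0,\alpha)$ with $r_D$ using realness of $\mathbb{A}(z)$ and the Perron spectral gap.
\end{itemize}

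The paper's route is shorter but rests on an external lemma. Yours is self-contained and yields slightly more: $y^\top\mathbb{A}^{(1)}(\alpha)x>0$ for the positive Perron vectors, rather than mere invertibility. That sign is exactly what the main text needs when it asserts $c_p>0$ in Proposition~\ref{propmain} and $\xi_\eta>0$ in Proposition~\ref{propmain2e}, because there $F^{(1)}(\alpha)=-B^{(1)}(-\alpha)$ and $F_\eta^{(1)}(\alpha)=-A^{(1)}(-\alpha)$.

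One small correction to your closing remark. You only use the left derivative, so the branch needs to coincide with $r_D$ only on $[\alpha-\delta,\alpha]$, where (i) already makes $\mathbb{A}(z)$ real and irreducible Metzler. The open-interval part of (i) is therefore convenient, not what makes your endpoint argument work.
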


\begin{proof}
Set $g(z) := r_D(\mathbb{A}(z))$. Because $\mathbb{A}(z)$ is irreducible Metzler for real $z\in[0,\alpha]$, its dominant real eigenvalue coincides with its spectral abscissa on that interval, so $g(z) = \tau(\mathbb{A}(z))$. By Lemma \ref{metzlerthm}-\ref{metzlerthmc}, $\mathbb{A}(0)$ is invertible since $g(0) < 0$. The convexity of $g$ on $[0,\alpha]$, together with the endpoint conditions $g(0) < 0$ and $g(\alpha) = 0$, implies that the left derivative at $\alpha$ is strictly positive; indeed,
\[
\partial_- g(\alpha) \ge \frac{g(\alpha) - g(0)}{\alpha} > 0.
\]
Hence the hypotheses of Lemma 1 in \citet{bst2025corr} are satisfied with $z_0 = \alpha$, so $\mathbb{A}(\alpha)$ has rank $N-1$ and $\mathbb{A}(z)^{-1}$ has a simple pole at $z=\alpha$. The residue formula then follows from Theorem 1 of \citet{bst2025corr}.
\end{proof}

\section{Useful results on generalized Erlang random variables} \label{sec_erlang}
Consider any generalized Erlang random variable of the form
\[
\mathbb{T}=\sum_{k=1}^D \Erl(a_k,b_k),
\]
with pairwise distinct rates $a_1,\ldots,a_D$. For background on generalized Erlang distributions and their structural properties, see \citet{dehon1982}. \citet{jasiulewicz2003} provide an explicit probability density function of $\mathbb{T}$, given by
\[
f_{\mathbb{T}}(t)=\sum_{k=1}^D \sum_{\ell=1}^{b_k} \frac{c_{k,\ell}}{(\ell-1)!}\, t^{\ell-1}e^{-a_k t}, \qquad t>0,
\]
where
\begin{equation}\label{coeffc}
c_{k,\ell}
= \left(\prod_{m=1}^D a_m^{b_m}\right)(-1)^{b_k-\ell}
\sum_{\substack{d_1,\ldots,d_D\ge 0 \\ d_k=0,\ \sum_{m=1}^D d_m=b_k-\ell}}
\prod_{\substack{m=1 \\ m\neq k}}^D \binom{b_m+d_m-1}{d_m} \frac{1}{(a_m-a_k)^{b_m+d_m}}.
\end{equation}
Building on this density expansion, the following lemma provides a useful expression for the matrix-valued expectation $E[e^{\mathbb{A}\mathbb{T}}]$.

\begin{lemma}\label{erlem}
Let $\mathbb T = \sum_{k=1}^D \Erl(a_k,b_k)$ be a generalized Erlang random variable with distinct rates $0<a_1<\cdots<a_D$, and let $\mathbb A$ be a square matrix with $\tau(\mathbb A)<a_1$. Then
\begin{equation}\label{erexpan1}
E\left[e^{\mathbb A\mathbb T}\right] = \sum_{k=1}^D\sum_{\ell=1}^{b_k} c_{k,\ell}(a_kI-\mathbb A)^{-\ell}.
\end{equation}
Moreover, if $\mathbb A(z)$ is a holomorphic matrix-valued function on an open connected set $\Omega\subset\mathbb C$, and if $a_kI-\mathbb A(z)$ is invertible for at least one $z\in\Omega$ for each $k=1,\ldots,D$, then
\[
\mathcal G(z) := \sum_{k=1}^D\sum_{\ell=1}^{b_k} c_{k,\ell}(a_kI-\mathbb A(z))^{-\ell}
\]
is meromorphic on $\Omega$, and its possible poles occur only at points $z\in\Omega$ where $a_kI-\mathbb A(z)$ is noninvertible for at least one $k$.
\end{lemma}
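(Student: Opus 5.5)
The plan is to first prove the expansion \eqref{erexpan1} for a scalar argument and then lift it to matrices. The only tool needed for the lift is the elementary identity
\[
\int_0^\infty \frac{t^{\ell-1}}{(\ell-1)!}\,e^{-a t}e^{\mathbb A t}\,dt=(aI-\mathbb A)^{-\ell},
\qquad \tau(\mathbb A)<a.
\]
This identity holds because the matrix Laplace transform of $t^{\ell-1}e^{\mathbb At}/(\ell-1)!$ is the $\ell$-th power of the resolvent. Convergence is guaranteed: any $\epsilon>0$ gives $\|e^{\mathbb At}\|\le C_\epsilon e^{(\tau(\mathbb A)+\epsilon)t}$, and we choose $\epsilon$ so that $\tau(\mathbb A)+\epsilon<a$. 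The identity itself can be checked by induction on $\ell$, using integration by parts or by differentiating the $\ell=1$ case $\int_0^\infty e^{-(aI-\mathbb A)t}dt=(aI-\mathbb A)^{-1}$ with respect to $a$.

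With this in hand, I write
\[
E[e^{\mathbb A\mathbb T}]=\int_0^\infty f_{\mathbb T}(t)e^{\mathbb At}\,dt
\]
and insert the \citet{jasiulewicz2003} density expansion with coefficients \eqref{coeffc}. Since $a_1=\min_k a_k>\tau(\mathbb A)$, every term $t^{\ell-1}e^{-a_kt}e^{\mathbb At}$ is absolutely integrable. The density is a finite sum, so integral and sum interchange without further justification, and applying the identity termwise gives \eqref{erexpan1}.

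For the meromorphy statement, fix $k$. The map $z\mapsto\det(a_kI-\mathbb A(z))$ is holomorphic on the connected open set $\Omega$ and is nonzero at some point, so its zero set $Z_k$ is discrete in $\Omega$. On $\Omega\setminus Z_k$, the adjugate formula gives
\[
(a_kI-\mathbb A(z))^{-1}=\operatorname{adj}(a_kI-\mathbb A(z))/\det(a_kI-\mathbb A(z)),
\]
which is a matrix of quotients of holomorphic functions. It is therefore meromorphic on $\Omega$, with poles contained in $Z_k$. Powers and finite linear combinations of meromorphic matrix functions are meromorphic, so $\mathcal G$ is meromorphic and holomorphic off $\bigcup_k Z_k$, which is exactly the claimed pole location.

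The only step needing care is the convergence estimate behind the matrix Laplace transform, because $\mathbb A$ may be non-normal and its Jordan blocks produce polynomial factors. The choice of $\epsilon$ above absorbs these factors, so no real obstacle is expected. I also note that the lemma does not need $\mathcal G(z)$ to equal $E[e^{\mathbb A(z)\mathbb T}]$ beyond the region $\tau(\mathbb A(z))<a_1$; there $\mathcal G$ simply serves as its meromorphic continuation, which is how it is used in Proposition \ref{propmain2e}.
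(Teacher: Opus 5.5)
Your proposal is correct and follows essentially the same route as the paper: you integrate the \citet{jasiulewicz2003} density expansion termwise against $e^{\mathbb A t}$, obtain $(a_kI-\mathbb A)^{-\ell}$ by induction on $\ell$ (integration by parts), and get meromorphy from the adjugate formula using that $\det(a_kI-\mathbb A(z))$ is not identically zero on the connected set $\Omega$. Your explicit bound $\|e^{\mathbb A t}\|\le C_\epsilon e^{(\tau(\mathbb A)+\epsilon)t}$ only makes precise the convergence, and the vanishing boundary terms, that the paper infers directly from the eigenvalues of $a_kI-\mathbb A$ having positive real parts.
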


\begin{proof}
By the density expansion of the generalized Erlang distribution,
\begin{equation}\label{eqlan1}
E\left[e^{\mathbb A\mathbb T}\right]
= \sum_{k=1}^D\sum_{\ell=1}^{b_k} c_{k,\ell} \int_0^\infty \frac{t^{\ell-1}}{(\ell-1)!} e^{-a_kt}e^{\mathbb A t}\,dt
= \sum_{k=1}^D\sum_{\ell=1}^{b_k} c_{k,\ell} \int_0^\infty \frac{t^{\ell-1}}{(\ell-1)!} e^{-(a_kI-\mathbb A)t}\,dt,
\end{equation}
where the second equality uses $a_kI$ commutes with $\mathbb A$. Since $\tau(\mathbb A)<a_1\le a_k$, all eigenvalues of $a_kI-\mathbb A$ have strictly positive real parts, so the integral $\int_0^\infty e^{-(a_kI-\mathbb A)t}\,dt$ is well defined and equals
\[
\left[-(a_kI-\mathbb A)^{-1} e^{-(a_kI-\mathbb A)t}\right]_0^\infty = (a_kI-\mathbb A)^{-1}.
\]
For $\ell\ge 2$, integration by parts gives
\begin{align*}
\int_0^\infty \frac{t^{\ell-1}}{(\ell-1)!} e^{-(a_kI-\mathbb A)t}\,dt
&= \left[\frac{t^{\ell-1}}{(\ell-1)!}\left(-(a_kI-\mathbb A)^{-1} e^{-(a_kI-\mathbb A)t}\right)\right]_0^\infty \\
&\quad + (a_kI-\mathbb A)^{-1} \int_0^\infty \frac{t^{\ell-2}}{(\ell-2)!} e^{-(a_kI-\mathbb A)t}\,dt \\
&= (a_kI-\mathbb A)^{-1} \int_0^\infty \frac{t^{\ell-2}}{(\ell-2)!} e^{-(a_kI-\mathbb A)t}\,dt.
\end{align*}
Combining this recursion with the base case $\ell=1$, induction on $\ell$ yields
\begin{equation}\label{eqlan2}
\int_0^\infty \frac{t^{\ell-1}}{(\ell-1)!} e^{-(a_kI-\mathbb A)t}\,dt = (a_kI-\mathbb A)^{-\ell}, \qquad \ell\ge 1.
\end{equation}
Substituting \eqref{eqlan2} into \eqref{eqlan1} establishes \eqref{erexpan1}.

For the second claim, if $\mathbb A(z)$ is holomorphic on $\Omega$, then so is $a_kI-\mathbb A(z)$. Since $a_kI-\mathbb A(z)$ is invertible for at least one $z\in\Omega$, the holomorphic function $\det(a_kI-\mathbb A(z))$ is not identically zero on $\Omega$. Whenever this determinant is nonzero,
\[
(a_kI-\mathbb A(z))^{-1} = \frac{\operatorname{adj}(a_kI-\mathbb A(z))}{\det(a_kI-\mathbb A(z))},
\]
where $\operatorname{adj}$ denotes the adjugate matrix. Hence each entry of $(a_kI-\mathbb A(z))^{-1}$ is a ratio of holomorphic functions with denominator not identically zero, so $(a_kI-\mathbb A(z))^{-1}$ is meromorphic on $\Omega$, and so are its powers. Since $\mathcal G$ is a finite linear combination of such terms, $\mathcal G$ is meromorphic on $\Omega$. Its possible poles can occur only where $\det(a_kI-\mathbb A(z))=0$ for some $k$, equivalently where $a_kI-\mathbb A(z)$ is noninvertible for at least one $k$.
\end{proof}

\end{document}